\newtheorem{theorem}{Theorem}[section]
\newtheorem*{Acknowledgement}{\textnormal{\textbf{Acknowledgement}}}
\newtheorem{proposition}[theorem]{Proposition}
\newtheorem{corollary}[theorem]{Corollary}
\theoremstyle{definition}
\newtheorem{definition}[theorem]{Definition}
\newtheorem{example}[theorem]{Example}
\newtheorem{Open Prob}[theorem]{Open Problem}
\theoremstyle{remark}
\numberwithin{equation}{section}
\def\DJ{\leavevmode\setbox0=\hbox{D}\kern0pt\rlap{\kern.04em\raise.188\ht0\hbox{-}}D}
\begin{document}

\title[Perimetric contraction on quadrilaterals]{ Perimetric contraction on quadrilaterals and related fixed point results }


\author[A.\ Banerjee, P.\ Mondal, L.K.\ Dey]
{Anish Banerjee$^{1}$, Pratikshan Mondal$^{2}$, Lakshmi Kanta Dey$^{1}$}

\address{{$^{1}$}   Anish Banerjee,
                    Department of Mathematics,
                    National Institute of Technology
                    Durgapur, Durgapur,  India}
                    \email{anishbanerjee1997@gmail.com}
\address{{$^{2}$}   Pratikshan Mondal,
                    Department of Mathematics,
                    Durgapur Government College,
                    Durgapur, India}
                    \email{real.analysis77@gmail.com}
                    
\address{{$^{1}$}   Lakshmi Kanta Dey,
                    Department of Mathematics,
                    National Institute of Technology
                    Durgapur, Durgapur,  India}
                    \email{lakshmikdey@yahoo.co.in}

\keywords{Fixed point theorems, Perimetric contraction on quadrilaterals, Kannan type perimetric contraction on quadrilaterals, Chatterjea type perimetric contraction on quadrilaterals. \\
\indent 2020 {\it Mathematics Subject Classification}. $47$H$09$, $47$H$10$.}

\begin{abstract}
In this article, we introduce a four-point analogue of Banach-type, Kannan-type, and Chatterjea-type contractions, and examine their properties. We establish sufficient conditions under which these mappings achieve fixed points in a complete metric space. Notably, the classical Banach contraction principle emerges as a special case of our results. To illustrate our theoretical findings, we present several non-trivial examples.

\end{abstract}

\maketitle

\setcounter{page}{1}


\section{Introduction and Preliminaries}
Fixed point theory plays a crucial role in mathematics, where many problems can be framed as fixed point problems. These problems involve investigating the existence and uniqueness of solutions. Applications of fixed point theory span diverse areas, including matrix equations, differential equations, integral equations, optimization, and machine learning.

The foundational work in this field dates back to Stefan Banach's introduction of the Banach contraction principle \cite{B22} in 1922. This principle guarantees the existence and uniqueness of fixed points of contraction mappings in a complete metric space. Subsequently, other prominent researchers contributed significantly to the evolution of fixed point theory. As a result, the concept of Banach contraction has been extended in various ways by relaxing the contraction condition and considering different topologies.


There are various classical results in the literature of fixed point theory. These results generalize Banach's theorem in various ways. Nadler \cite{N69} extended Banach's theorem from single-valued mapping to multi-valued mappings. Kirk \cite{K65} has proposed the nonexpansive mapping type extension of the Banach contraction principle. Berinde \cite{BP20} has introduced the enriched contractions generalizing the contraction mappings in normed linear spaces. Generalizing the underlying space, Browder \cite{B68} has initiated the fixed point result in topological vector spaces. Wardowski \cite{W12} has disclosed $F$-contractions using implicit functions extending the contraction mappings. 
Khojasteh, et al. induced the idea of $Z$-contraction mappings by utilizing simulation functions, see \cite{KSR15},\cite{SDCH19}. Kannan \cite{RK68} obtained a fixed point result for a class of mappings which characterizes the completeness of the metric space. Chatterjea \cite{C72} proposed a class of mappings independent of Banach's class and identified the prerequisites for reaching fixed point.  
Now, we recall some well-known results. 




\medskip

Recently in $2023$, Petrov \cite{P23} introduced the notion of a new class of mappings that can be distinguished as the mapping that contract the perimeters of triangles and proved a fixed point result. Let us recall that.

\begin{definition}(\cite{P23})
Let $(Y,\rho)$ be a metric space with at least three points. Then the mapping $T: Y \to Y$ is defined as contracting perimeters of triangles if there is an $\alpha \in [0,1)$ such that 
\begin{equation}
\rho(Tp,Tq)+ \rho(Tq,Tr)+ \rho(Tr,Tp) \le \alpha (\rho(p,q)+ \rho(q,r)+ \rho(r,p)) \label{cpt}
\end{equation}
for three pairwise distinct points $p,q,r \in Y$.
\end{definition}

These mappings can attain fixed points in a complete metric space if and only if it has no periodic points of prime period $2$. There are at most two fixed points. 


\medskip

In 2024, Petrov along with Bisht introduced the three-point analogue of both Kannan type mappings \cite{PB24} and Chatterjea-type mappings \cite{BP24} utilizing the notion of mapping contracting perimeters of triangles and developed fixed point results.

\begin{definition}(\cite{PB24})
Let $(Y,\rho)$ be a metric space with at least three points. Then $T: Y \to Y$ is a generalized Kannan type mapping on $Y$ if there is a $0 \le \delta < \frac{2}{3}$ such that 
\begin{equation}
\rho(Tp,Tq)+ \rho(Tq,Tr)+ \rho(Tr,Tp) \le \delta (\rho(p,Tp)+ \rho(q,Tq)+ \rho(r,Tr)) \label{gkc}
\end{equation} 
for any three pairwise distinct points $p,q,r \in Y$.
\end{definition}



\begin{definition}(\cite{BP24})
Let $(Y,\rho)$ be a metric space with at least three points. Then $T: Y \to Y$ is a generalized Chatterjea type mapping on $Y$ if there is $0 \le \lambda < \frac{1}{3}$ such that 
\begin{align}
\rho(Tp,Tq)+ \rho(Tq,Tr)+ \rho(Tr,Tp) &\le \lambda (\rho(p,Tq)+ \rho(p,Tr)+\rho(q,Tp) \notag\\
&\hspace{1cm}+ d(q,Tr)+ d(r,Tp)+ d(r,Tq))\label{gcc}
\end{align}
for any three pairwise distinct points $p,q,r \in Y$
\end{definition}

In a complete metric space, both a generalized Kannan type mapping and a generalized Chatterjea type mapping attain fixed points if it does not achieve periodic points of prime period $2$. There are at most two fixed points.


\medskip

We find the results intriguing and wish to explore the four-point analogue of previous findings. Our goal is to establish adequate conditions ensuring the existence and uniqueness of fixed points. Additionally, we aim to compare these different classes of mappings and uncover any relationships between them.

\medskip

In the third section, we delve into a novel type of mapping characterized by mapping that contracts the perimeter of quadrilaterals. We prove a fixed point result of such mapping in a complete metric space. Notably, achieving a fixed point requires avoiding periodic points of prime period 2 and 3. Interestingly, the class of contraction mappings is a subset of these perimeter-based mappings. As a straightforward consequence, we recover Banach’s fixed point theorem. To validate our results, we provide illustrative examples.



\medskip

In the fourth section, we introduce Kannan type perimetric contraction on quadrilaterals and establish a fixed point result for these mappings. We derive a necessary condition for the fixed point to be unique. Additionally, we investigate the relationship between the class of Kannan type perimetric contraction on quadrilaterals and both generalized Kannan type mappings and mappings that contract the perimeter of quadrilaterals. Notably, we demonstrate that these classes are independent. To illustrate our findings, we provide non-trivial examples. 


\medskip

In the fifth section, we introduce Chatterjea type perimetric contraction on quadrilaterals and obtain a fixed point result for these mappings. The connection between the classes of Chatterjea type perimetric contraction on quadrilaterals with generalized Chatterjea type mappings and mapping contracting perimeter of quadrilaterals has been performed. An adequate condition has been implemented for the uniqueness of the fixed point.

Throughout the paper, we have denoted $(M,d)$ as metric space, 
$|M|$ as the cardinality of the set $M$, 
$\mathbb{N}$ as the set of natural numbers, 
and $Fix(T)$ as the collection of all fixed points of $T$.


The concept of a periodic point is defined as follows:

Let $T$ be a mapping on the metric space $M$. A point $m \in M$ is said to be a periodic point of period $p$ if $T^p{m}=m$. The prime period of $m$ is the least positive integer $p$ for which $T^p{m}=m$.

\section{Perimetric contraction }
We begin the section with the following definition of perimetric contraction on quadrilaterals:

\begin{definition}\label{D31}
Let $(M,d)$ be a metric space with at least four points. Then the mapping $T:M \to M$ is said to be a perimetric contraction on quadrilaterals in $M$ if there is an $\alpha \in [0,1)$ such that 
\begin{align}
&d(Tp,Tq)+ d(Tq,Tr)+ d(Tr,Ts)+ d(Ts,Tp)\notag\\
&\le \alpha (d(p,q)+ d(q,r)+ d(r,s)+ d(s,p)) \label{pcq}
\end{align}
for all distinct points $p,q,r,s \in M$.
\end{definition}
Now, we investigate the continuity of these mappings.
\begin{theorem}\label{T32}
A perimetric contraction on quadrilaterals is continuous.
\end{theorem}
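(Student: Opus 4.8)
The plan is to prove sequential continuity at an arbitrary point $x_0 \in M$. So fix $x_0$ together with a sequence $x_n \to x_0$; the goal is $Tx_n \to Tx_0$. I would argue by contradiction: if this fails, there are an $\varepsilon > 0$ and a subsequence, which I relabel as $(x_n)$, with $d(Tx_n,Tx_0) \ge \varepsilon$ for every $n$. Since $d(Tx_n,Tx_0) > 0$, none of these $x_n$ can equal $x_0$ except finitely often (if $x_n = x_0$ then $d(Tx_n,Tx_0) = 0$), so after discarding finitely many terms I may assume $x_n \ne x_0$ for all $n$.

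The key combinatorial point is that a sequence converging to $x_0$ whose terms all differ from $x_0$ must assume infinitely many distinct values: if it took only the values $v_1,\dots,v_p$, then $d(x_n,x_0) \ge \min_{1\le i\le p} d(v_i,x_0) > 0$ for every $n$, contradicting $x_n \to x_0$. Hence I can extract a further subsequence — relabelled once more as $(x_n)$ — consisting of pairwise distinct points, each different from $x_0$. This supply of distinct points is exactly what is needed to invoke \eqref{pcq}, and it also shows incidentally that the non-isolated case is the only one that needs an argument.

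Now comes the main idea: apply \eqref{pcq} with four points taken from the sequence itself, namely $p = x_n$, $q = x_m$, $r = x_k$, $s = x_0$ for three distinct indices $n,m,k$. These four points are pairwise distinct by construction, so \eqref{pcq} is legitimate, and discarding the nonnegative left-hand terms other than $d(Tx_0,Tx_n)$ gives
\[
d(Tx_0,Tx_n) \le \alpha\bigl(d(x_n,x_m)+d(x_m,x_k)+d(x_k,x_0)+d(x_0,x_n)\bigr).
\]
Holding $n$ fixed and letting $m,k \to \infty$, the two middle distances tend to $0$ while $d(x_n,x_m) \to d(x_n,x_0)$; since the left-hand side is independent of $m,k$, this yields $d(Tx_0,Tx_n) \le 2\alpha\, d(x_n,x_0)$. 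Finally letting $n \to \infty$ forces $d(Tx_0,Tx_n) \to 0$, contradicting $d(Tx_n,Tx_0) \ge \varepsilon$. Thus $T$ is continuous at $x_0$, and $x_0$ was arbitrary.

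I expect the only delicate point — the "main obstacle" — to be the bookkeeping forced by the distinctness hypothesis in \eqref{pcq}: one cannot simply pair $x_n$ with $x_0$ and two arbitrary auxiliary points, so the reduction to a pairwise-distinct subsequence (and the small value-counting observation justifying it) is where the actual content lies. Once four genuinely distinct points from the tail of $(x_n)$ are available, the conclusion is a one-line limiting computation from \eqref{pcq}.
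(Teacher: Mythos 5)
Your argument is correct and rests on the same mechanism as the paper's proof: place $x_0$ and $x$ as two vertices of a quadrilateral whose remaining two vertices lie close to $x_0$, so that \eqref{pcq} bounds $d(Tx,Tx_0)$ by roughly $2\alpha\,d(x,x_0)$. The paper runs this directly in $\varepsilon$--$\delta$ form with two auxiliary points chosen in a $\delta$-neighbourhood of the limit point, whereas you phrase it as a sequential contradiction and draw the auxiliary points from the convergent sequence itself; your explicit extraction of a pairwise-distinct subsequence is, if anything, more careful about the distinctness hypothesis in \eqref{pcq} than the paper's version, which does not check that its auxiliary points $a,b$ differ from the variable point $m$.
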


\begin{proof}
Let $(M,d)$ be a metric space with at least four points, and let $T: M \to M$ be a perimetric contraction on quadrilaterals. Let $m'\in M$ be arbitrary. If $m'$ is an isolated point of $M$, then it is obvious that $T$ is continuous at $m'$. 

Now, suppose that $m'$ is a limit point of $M$. Therefore, it remains to show that for any $\epsilon>0$, there exists a $\delta>0$ such that $d(Tm,Tm')<\varepsilon$ for all $m \in M$ satisfying $d(m,m')<\delta$. 

Let $\varepsilon>0$ be arbitrary. Choose $\delta>0$ be such that $0<\delta<\frac{\varepsilon}{6\alpha}$.

Since $m'$ is a limit point of $M$, there exist $a,b \in M$ with $a\neq b\neq m'$ such that $d(m',a)<\delta$ and $d(m',b)<\delta$. Now, for all $m \in M$ with $m \neq m'$ satisfying $d(m, m')<\delta$, we have
\begin{align*}
d(Tm,Tm') &\le d(Tm,Tm')+ d(Tm',Ta)+ d(Ta,Tb)+ d(Tb,Tm)\\
&\le \alpha(d(m,m')+ d(m',a)+ d(a,b)+ d(b,m))\\
&\le 2\alpha(d(m,m')+ d(m',a)+ d(m',b))\\
&<6\alpha\delta<\varepsilon
\end{align*}
and hence the result follows.
\end{proof}
Now, we are ready to establish a condition that is both necessary and sufficient for the existence of fixed point(s) for perimetric contraction on quadrilaterals.
\begin{theorem}\label{T33}
Let us suppose a complete metric space $(M,d)$ with at least four points. Consider a mapping $T: M \to M$ to be a perimetric contraction on quadrilaterals in $M$. Then, $T$ attains a fixed point in $M$ if and only if it does not attain periodic points of prime period 2 and 3. Furthermore, $T$ can attain at most three fixed points.
\end{theorem}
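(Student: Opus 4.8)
The plan is to mirror Petrov's strategy for perimeter-contracting mappings on triangles, but now tracking the extra periodic-orbit obstructions that the quadrilateral inequality introduces. The necessity direction is immediate: if $Tm=m$ (so $m$ is a fixed point), then $m$ cannot simultaneously be a periodic point of prime period $2$ or $3$, but this is not quite the right reading — rather, I would argue the contrapositive at the level of the dynamical system. Concretely, suppose $T$ has a periodic point of prime period $2$, say $\{p,q\}$ with $Tp=q$, $Tq=p$, $p\ne q$. To exploit the defining inequality \eqref{pcq} I need four distinct points, so I would pick two further points from the orbit structure or from $M$ and show that applying $T$ to a suitable quadrilateral forces a strict contradiction unless a fixed point already exists; the cleanest route is to show that existence of a fixed point $x_0$ together with a $2$- or $3$-cycle produces four (or enough) distinct points on which \eqref{pcq} fails. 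I would handle period $2$ and period $3$ separately, in each case combining the cycle points with the fixed point (and, if needed, one auxiliary point, using $|M|\ge 4$) to get a perimeter that is mapped to itself, contradicting $\alpha<1$.

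For the sufficiency (existence) direction, I would fix an arbitrary $x_0\in M$ and form the Picard iterates $x_{n+1}=Tx_n$. The heart of the argument is to show the orbit is Cauchy. If at some stage $x_n=x_{n+1}$ we are done, so assume all consecutive iterates differ. Define $P_n := d(x_n,x_{n+1})+d(x_{n+1},x_{n+2})+d(x_{n+2},x_{n+3})+d(x_{n+3},x_{n+4})$, the perimeter of the quadrilateral on four consecutive iterates. Provided $x_n,x_{n+1},x_{n+2},x_{n+3}$ are pairwise distinct, inequality \eqref{pcq} gives $P_{n}\le \alpha P_{n-1}$ after re-indexing, so the perimeters decay geometrically, whence $\sum_n d(x_n,x_{n+1})$ converges and the sequence is Cauchy; completeness yields a limit $x^\ast$, and continuity (Theorem~\ref{T32}) gives $Tx^\ast=x^\ast$. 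The delicate point — and the main obstacle — is precisely the distinctness hypothesis needed to invoke \eqref{pcq}: along the orbit one may encounter coincidences $x_i=x_j$ among the four consecutive iterates without the sequence having stabilized, and such a coincidence is exactly a periodic point of prime period $2$ or $3$ (e.g. $x_n=x_{n+2}$ is period $2$, $x_n=x_{n+3}$ is period $3$). So the hypothesis "no periodic points of prime period $2$ and $3$" is what guarantees the four iterates stay distinct and the recursion runs; I would isolate this as a lemma-style claim inside the proof: either the orbit is eventually constant, or all windows of four consecutive iterates consist of distinct points.

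Finally, for the bound of at most three fixed points, suppose for contradiction that $p,q,r,s$ are four distinct fixed points. Then the left side of \eqref{pcq} equals $d(p,q)+d(q,r)+d(r,s)+d(s,p)$, the same as the right side, while the inequality demands it be at most $\alpha$ times that quantity with $\alpha<1$; since the common value is strictly positive (the four points are distinct, so at least two of the four consecutive distances are nonzero), this is a contradiction. Hence $|Fix(T)|\le 3$. I expect the first two paragraphs to require the most care — particularly organizing the case analysis in the necessity direction and correctly identifying which coincidences among iterates correspond to prime period $2$ versus prime period $3$ — whereas the cardinality bound is a one-line consequence of \eqref{pcq}.
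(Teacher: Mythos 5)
Your treatment of the sufficiency direction and of the bound $|Fix(T)|\le 3$ is essentially the paper's argument: four consecutive Picard iterates are pairwise distinct precisely because a coincidence $x_n=x_{n+2}$ or $x_n=x_{n+3}$ would produce a periodic point of prime period $2$ or $3$, the perimeters $P_n$ of consecutive windows satisfy $P_n\le\alpha P_{n-1}$, and $d(x_n,x_{n+1})\le \alpha^{n}P_0$ makes the orbit Cauchy; the cardinality bound is the same one-line computation. (The paper verifies $Tx^\ast=x^\ast$ by a direct four-point estimate rather than by citing continuity, but your appeal to Theorem~\ref{T32} is equally valid.)

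The necessity direction, however, contains a genuine gap, and the strategy you sketch cannot be repaired. You propose to treat prime period $2$ and prime period $3$ separately, in each case combining the cycle with the fixed point and one auxiliary point so as ``to get a perimeter that is mapped to itself.'' A fixed point together with a $2$-cycle supplies only three points that $T$ permutes among themselves, so every admissible quadrilateral must use a fourth point $w$ whose image $Tw$ you do not control; the image perimeter is therefore not the original perimeter, and no contradiction follows. Indeed, the separate statement you are aiming at is false: take $M=\{x_0,p,q,w\}$ with $d(x_0,p)=d(x_0,q)=d(p,q)=1$ and $d(w,x_0)=d(w,p)=d(w,q)=10$, and define $Tx_0=x_0$, $Tp=q$, $Tq=p$, $Tw=x_0$. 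Every quadrilateral on these four points has perimeter $22$, while its image has perimeter at most $4$, so \eqref{pcq} holds with $\alpha=\tfrac{2}{11}$; yet $T$ has the fixed point $x_0$ and the $2$-cycle $\{p,q\}$. What the paper actually proves (and all that can be true) is that a fixed point is incompatible with the \emph{simultaneous} presence of a periodic point of prime period $2$ and one of prime period $3$: the six points $p$, $q$, $Tq$, $r$, $Tr$, $T^2r$ then furnish three instances of \eqref{pcq} whose sum forces $\alpha\ge 1$, with no uncontrolled auxiliary point needed. You would have to adopt that combined formulation in the necessity direction, and note that it is strictly weaker than ``no periodic points of prime period $2$ and none of prime period $3$,'' which is the hypothesis your sufficiency argument consumes.
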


\begin{proof}
Let $T: M \to M$ be a perimetric contraction on quadrilaterals in $M$ and let $T$ does not attain periodic points of prime period $2$ and $3$. 

Let $a_0 \in M$ be chosen arbitrarily. Define $Ta_0=a_1, Ta_1=a_2, \cdots, Ta_n=a_{n+1}, \cdots$. If $a_n$ is a fixed point of $T$ for any $n$, then we are done.

Now, assume that $a_n$ is not a fixed point of $T$ for all $n$. Since $a_n$ is not a fixed point of $T$, it follows that $a_0 \neq a_{1}, a_{1} \neq a_{2}$ and so on. Since $T$ does not attain periodic points of prime period $2$, then $a_0 \neq a_{2}, a_{1} \neq a_{3}$ and so on. Again, since $T$ does not attain periodic points of prime period $3$, we have $a_0 \neq a_{3}$ and so on. Therefore, any four consecutive elements of $\{a_n\}$ are distinct. 

Let $\lambda_n = d(a_n,a_{n+1})+ d(a_{n+1},a_{n+2})+ d(a_{n+2},a_{n+3})+ d(a_{n+3},a_n)$ for all $n \in \mathbb{N} \cup \{0\}$ so that $\lambda_n>0$ for all $n\in \mathbb{N} \cup \{0\}$.


Now, for any $n \in \mathbb{N} \cup \{0\}$, we have $\lambda_n \le \alpha \lambda_{n-1}$.
Then, it is clear that
\begin{align*}
d(a_0,a_1) &\le \lambda_0,\\
d(a_1,a_2) &\le \lambda_1 \le \alpha \lambda_0,\\
\vdots\\
d(a_n,a_{n+1}) &\le \lambda_n \le \alpha^n \lambda_0.
\end{align*}

Now, for all $n\in \mathbb{N}\cup \{0\}$ and for any $p=1,2,3,\cdots$, we have
\begin{align*}
d(a_n,a_{n+p}) &\le d(a_n,a_{n+1})+ d(a_{n+1},a_{n+2})+ \dots + d(a_{n+p-1},a_{n+p})\\
&\le \alpha^n \lambda_0+ \alpha^{n+1} \lambda_0+ \dots + \alpha^{n+p-1} \lambda_0 \\
&\le \alpha^n (1+ \alpha+ \dots + \alpha^{p-1}) \lambda_0\\
&\le \alpha^n \dfrac{1-\alpha^p}{1-\alpha} \lambda_0.
\end{align*}

This implies that $d(a_n,a_{n+p}) \to 0$ as $n \to \infty$ and for any $p=1,2,3,\cdots$. Hence, $\{a_n\}$ is a Cauchy sequence in $M$ and therefore convergent, as $M$ is complete. Let $a_n\to a^* \in M$. Now,
\begin{align*}
&d(a^*,Ta^*)\\
&\le d(a^*,a_n)+ d(a_n,Ta^*)\\
&\le d(a^*,a_n)+ d(Ta_{n-1},Ta^*)+ d(Ta_{n-2},Ta_{n-1})+ d(Ta_{n-1},Ta_n)+ d(Ta_n,Ta^*)\\
&\le d(a^*,a_n)+ \alpha (d(a_{n-1},a^*)+ d(a_{n-2},a_{n-1})+ d(a_{n-1},a_n)+ d(a_n,a^*)).
\end{align*}

Taking $n \to \infty$, we get $Ta^*=a^*$, and $a^* \in Fix(T)$.

Conversely, let $T$ have a fixed point say, $p \in M$. Suppose that $T$ attain a periodic point $q$ of prime period $2$ and a periodic point $r$ of prime period $3$.

Let, $Tq=a, Tr=b, Tb=c$. Then using (\ref{pcq}), we have
\begin{align}
& d(q,p)+ d(p,a)+ d(a,b)+ d(b,q) \le \alpha (d(a,p)+ d(p,q)+ d(q,r)+ d(r,a)),\label{331}\\
& d(q,p)+ d(p,a)+ d(a,c)+ d(c,q) \le \alpha (d(a,p)+ d(p,q)+ d(q,b)+ d(b,a)),\label{332}\\
&\hspace{5cm}\text{and} \notag\\
& d(q,p)+ d(a,p)+ d(a,r)+ d(r,q) \le \alpha (d(a,p)+ d(p,q)+ d(q,c)+ d(c,a)).\label{333}
\end{align}

Adding (\ref{331}), (\ref{332}) and (\ref{333}), we get $\alpha\ge 1$ which is a contradiction to (\ref{pcq}).

Thus, $T$ cannot attain periodic points of prime period $2$ and $3$.

Let us suppose that $T$ has four distinct fixed points, say, $p,q,r,s$.
Then
\begin{align*}
&d(Tp,Tq)+ d(Tq,Tr)+ d(Tr,Ts)+ d(Ts,Tp)\\
&\le \alpha (d(p,q)+ d(q,r)+ d(r,s)+ d(s,p))
\end{align*}
which again implies that $\alpha\ge 1$ - a contradiction to (\ref{pcq}). Hence, the result follows. 
\end{proof}

Below, we present the following examples in support of Theorem~\ref{T33}. The first one is an example of a mapping contracting perimeter of quadrilaterals with exactly three fixed points.

\begin{example}\label{E34}
Let $(M,d)$ be a metric space where $M=\{w,x,y,z\}$ and let $d$ be the discrete metric on $M$. Let $T:M \to M$ be defined as $Tw=x, Tx=x, Ty=y, Tz=z$. Then, $T$ is a perimetric contraction on quadrilaterals in $M$. Note that $T$ does not contain periodic points of prime period $2$ and $3$. Thus, Theorem~\ref{T33} guarantees that $T$ has a fixed point. Clearly  $Fix(T)=\{x,y,z\}$.
\end{example}
Next, we provide examples to show that neither of the conditions that $T$ has no periodic points of prime period $2$ and no periodic points of prime period $3$ can be dropped for the existence of fixed points.

\begin{example}\label{E35}
 Let $M=\{a,b,c,d\}$ be a metric space endowed with the discrete metric $d$. Let $T:M \to M$ be defined by $Ta=c, Tb=c, Tc=b, Td=b$. Then $T$ is a perimetric contraction on quadrilaterals in $M$. But, since $b$ and $c$ are periodic points of prime period $2$, therefore by Theorem~\ref{T33}, $T$ has no fixed point in $M$.
\end{example}

\begin{example}\label{E36}
Let $(M,d)$ be a metric space with $M =\{p,q,r,s\}$ and $d$ be the discrete metric on $M$. Let $T:M \to M$ be defined by $Tp=r, Tq=r, Tr=s, Ts=q$. Then $T$ is a perimetric contraction on quadrilaterals in $M$. But, since $q,r,s$ are periodic points of prime period $3$, it follows from Theorem~\ref{T33}, $T$ has no fixed point in $M$.
\end{example}

From Example \ref{E34}, we observe that a mapping contracting the perimeter of quadrilaterals may have multiple fixed points. To guarantee the existence of a unique fixed point for such a mapping, we consider an infinite complete metric space, which leads to our next result.
\begin{proposition}\label{P37}
Let $(M,d)$ be a complete metric space and let $T: M \to M$ be a perimetric contraction on quadrilaterals in $M$. If $M$ contains infinitely many points such that the iterative sequence $m_0, m_1=Tm_0, m_2=Tm_1,\dots $, converges to a point $\xi \in X$ with $\xi \neq y_i$; for all $i \in \mathbb{N} \cup \{0\}$, then $\xi$ is the unique fixed point of $T$.
\end{proposition}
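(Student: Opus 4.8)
The plan is to prove the two assertions — that $\xi$ is a fixed point and that it is the only one — separately. The first is immediate from continuity: by Theorem~\ref{T32}, $T$ is continuous, so $m_n \to \xi$ forces $m_{n+1} = Tm_n \to T\xi$; since also $m_{n+1} \to \xi$, uniqueness of limits gives $T\xi = \xi$, i.e.\ $\xi \in Fix(T)$. The substance of the proposition is therefore the uniqueness, which I would establish by applying $(\ref{pcq})$ to a quadruple built from the tail of the Picard orbit together with $\xi$ and a hypothetical second fixed point, and then letting the index tend to infinity.

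So suppose, towards a contradiction, that $T$ has a fixed point $\eta$ with $\eta \neq \xi$. The first task is to verify that for every $n$ the four points $m_n, m_{n+1}, \xi, \eta$ are pairwise distinct, since only then is $(\ref{pcq})$ applicable to them. We have $m_n \neq m_{n+1}$: otherwise $m_n = Tm_n$ would be a fixed point, so $m_k = m_n$ for all $k \ge n$, whence $\xi = m_n$, contradicting the hypothesis $\xi \neq m_i$. Likewise $m_n \neq \eta$: otherwise $m_k = \eta$ for all $k \ge n$, so $\xi = \lim m_k = \eta$. Finally $\xi \neq m_n$ and $\xi \neq m_{n+1}$ by hypothesis, and $\xi \neq \eta$ by assumption, so all six pairs are distinct.

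Now apply $(\ref{pcq})$ with $p = m_n$, $q = \xi$, $r = \eta$, $s = m_{n+1}$; using $Tm_n = m_{n+1}$, $T\xi = \xi$, $T\eta = \eta$ and $Tm_{n+1} = m_{n+2}$ this reads
\begin{align*}
& d(m_{n+1},\xi) + d(\xi,\eta) + d(\eta,m_{n+2}) + d(m_{n+2},m_{n+1}) \\
& \le \alpha\big( d(m_n,\xi) + d(\xi,\eta) + d(\eta,m_{n+1}) + d(m_{n+1},m_n) \big).
\end{align*}
Letting $n \to \infty$, using $m_n \to \xi$ and the continuity of the metric, the left-hand side converges to $2\,d(\xi,\eta)$ and the right-hand side to $2\alpha\,d(\xi,\eta)$, so $(1-\alpha)\,d(\xi,\eta) \le 0$; since $\alpha \in [0,1)$ this forces $\xi = \eta$, a contradiction. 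Hence $\xi$ is the unique fixed point.

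I expect the pairwise-distinctness step to be the only place that needs genuine care: inequality $(\ref{pcq})$ is assumed only for pairwise distinct quadruples, so the argument collapses unless the orbit stays non-degenerate near $\xi$ — which is precisely what the hypotheses ``$\xi \neq m_i$ for all $i$'' and the infinitude of $M$ ensure (a convergent sequence taking only finitely many values is eventually constant). Example~\ref{E34} shows that in a finite space such a mapping may genuinely have several fixed points, so some such hypothesis is indispensable.
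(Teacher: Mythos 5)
Your proof is correct and the uniqueness argument is essentially the paper's: both apply \eqref{pcq} to a quadruple consisting of $\xi$, $\eta$ and two consecutive orbit points and let the index tend to infinity (the paper packages this as a ratio $K_i\le\alpha$ with $K_i\to 1$, you as $(1-\alpha)\,d(\xi,\eta)\le 0$ — the same computation). The only divergence is in establishing that $\xi$ is fixed: the paper simply cites Theorem~\ref{T33}, whereas your appeal to continuity (Theorem~\ref{T32}) is more direct and, together with your explicit verification that the quadruple is pairwise distinct, actually tightens two points the paper leaves implicit.
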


\begin{proof}
That $\xi$ is a fixed point of $T$ follows from Theorem~\ref{T33}. Let $\eta$ be another fixed point of $T$. Then $\eta \neq m_i$, for all $i \in \mathbb{N} \cup \{0\}$, otherwise we have $\xi = \eta$. Therefore $\xi, \eta$, and $ m_i$ are all distinct, for all $i \in \mathbb{N} \cup \{0\}$.

Let, for all $i\in \mathbb{N}\cup \{0\}$,
\begin{align*}
K_i&= \dfrac{d(T\xi, T\eta)+ d(T\eta, Tm_{i-1})+ d(Tm_{i-1},Tm_i)+ d(Tm_i,T\xi)}{d(\xi, \eta)+ d(\eta, m_{i-1})+ d(m_{i-1},m_i)+ d(m_i,\xi)}\\
&= \dfrac{d(\xi, \eta)+ d(\eta, m_i)+ d(m_i,m_{i+1})+ d(m_{i+1},\xi)}{d(\xi, \eta)+ d(\eta, m_{i-1})+ d(m_{i-1},m_i)+ d(m_i,\xi)}.
\end{align*}
Then $K_i\le \alpha$ for all $i\in \mathbb{N}\cup \{0\}$. Now, letting $i\to \infty$, we get $K_i \to 1$ - which is a contradiction to (\ref{pcq}). 

Therefore, $T$ has a unique fixed point.
\end{proof}
We provide an alternative proof of the Banach Contraction Principle using  Theorem~\ref{T33}.
\begin{corollary}\label{C38}
(Banach Contraction Principle) Let $(M,d)$ be a complete metric space and let $T: M \to M$ be a contraction mapping, then $T$ has a unique fixed point in $M$.
\end{corollary}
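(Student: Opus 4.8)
The plan is to derive the Banach Contraction Principle as a consequence of Theorem~\ref{T33}. The first observation is that a contraction mapping $T:M\to M$ with contraction constant $\alpha\in[0,1)$ is automatically a perimetric contraction on quadrilaterals: for any four distinct points $p,q,r,s$ one simply applies the contraction inequality $d(Tx,Ty)\le\alpha d(x,y)$ to each of the four pairs $(p,q),(q,r),(r,s),(s,p)$ and adds. So \eqref{pcq} holds with the same $\alpha$. The only genuine subtlety is that Definition~\ref{D31} and Theorem~\ref{T33} require $|M|\ge 4$, so the degenerate cases $|M|\in\{1,2,3\}$ must be handled separately.

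**For the case $|M|\ge 4$,** I would argue as follows. By the above, $T$ is a perimetric contraction on quadrilaterals. Next I claim $T$ has no periodic point of prime period $2$ or $3$: if $Tp=q$, $Tq=p$ with $p\ne q$, then $d(p,q)=d(Tq,Tp)\le\alpha d(q,p)<d(p,q)$, a contradiction; similarly, if $p,q,r$ form a $3$-cycle, summing $d(Tp,Tq)+d(Tq,Tr)+d(Tr,Tp)\le\alpha\bigl(d(p,q)+d(q,r)+d(r,p)\bigr)$ and noting the left side equals $d(q,r)+d(r,p)+d(p,q)$ again forces $\alpha\ge 1$. Hence Theorem~\ref{T33} applies and $T$ has a fixed point, say $\xi$. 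For uniqueness, if $\eta\ne\xi$ is another fixed point, then $d(\xi,\eta)=d(T\xi,T\eta)\le\alpha d(\xi,\eta)<d(\xi,\eta)$, impossible. (One could instead cite Theorem~\ref{T33}'s "at most three fixed points" and then rule out two distinct fixed points directly by the same one-line contraction argument — either route works.)

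**For the remaining small cases,** when $1\le|M|\le 3$ the sufficient apparatus above does not literally apply, so I would give the standard direct Picard-iteration argument: fix $a_0\in M$, set $a_{n+1}=Ta_n$, show $d(a_n,a_{n+1})\le\alpha^n d(a_0,a_1)$, deduce $\{a_n\}$ is Cauchy, use completeness to get a limit $\xi$, and use continuity of $T$ (contractions are clearly Lipschitz, hence continuous) to conclude $T\xi=\xi$; uniqueness is the same one-line argument. Alternatively, and more economically, one may embed $M$ isometrically into a larger complete metric space with at least four points (e.g. adjoin finitely many extra points at mutual distance $1$ that are also at distance $1$ from every point of $M$, and extend $T$ to fix each new point — checking this extension is still a contraction, which it is since the new points are fixed), apply the $|M|\ge4$ case to get a fixed point in the enlargement, and observe it must lie in $M$. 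Either way the small-cardinality obstruction is routine.

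**The main obstacle** — such as it is — is purely bookkeeping: verifying that the hypotheses of Theorem~\ref{T33} (four points, no period-$2$ and no period-$3$ cycles) are met, and disposing of the low-cardinality cases that the theorem does not cover. There is no deep step; the content is entirely in recognizing a contraction as a special perimetric contraction on quadrilaterals, which is exactly the reduction advertised in the introduction.
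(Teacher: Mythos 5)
Your main line of argument is essentially the paper's own: observe that summing the contraction inequality over the four sides makes $T$ a perimetric contraction on quadrilaterals, rule out periodic points of prime period $2$ and $3$ directly from the contraction condition, invoke Theorem~\ref{T33} to get a fixed point when $|M|\ge 4$, and finish uniqueness with the one-line estimate $d(\xi,\eta)\le\alpha\, d(\xi,\eta)$. The only real divergence is in the degenerate cases $|M|\le 3$: the paper handles $|M|=3$ by a finite pigeonhole argument (the orbit of any point in a $3$-element set must close up into a cycle of length $1$, $2$, or $3$, and the longer cycles are impossible), while you fall back on the classical Picard iteration, which is perfectly valid and arguably cleaner. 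One caveat: your \emph{alternative} suggestion for the small cases --- adjoining extra points at mutual distance $1$ and at distance $1$ from every point of $M$, extended as fixed points of $T$ --- does not work as stated. First, that assignment need not be a metric (if $\operatorname{diam}(M)>2$ the triangle inequality through a new point fails); second, even when it is, the extension is not a contraction, since for a new fixed point $u$ and $x\in M$ one gets $d(Tu,Tx)=d(u,Tx)=1=d(u,x)$ with no strict decrease. Since you offer this only as an alternative to the Picard argument, the proof as a whole stands, but that route should be deleted or repaired.
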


\begin{proof}
If $|M|=1$ or $|M|=2$, then there is nothing to prove.

Now, for $|M|=3$, if there does not exist $m \in M$ such that $Tm=m$, then there exists $m \in M$ such that $T^2m=m$ or $T^3m=m$.

If there exists $m \in M$ such that $T^2m=m$, then $d(m, Tm)= d(Tm,m)= d(Tm, T^2m)$, a contradiction to the contraction condition. 

Again, if there exists $m \in M$ such that $T^3m=m$, then 
\begin{align*}
d(m,Tm)=d(T^3m,T^4m) &\le \alpha^3 d(m,Tm)
\end{align*}
which contradicts the contraction condition. Thus, there must be an $m \in M$ such that $Tm=m$.

Thus, if there exists $m \in M$ such that $T^2m=m$ or $T^3m=m$, then a contradiction occurs. So, there does not exist $m\in M$ such that $T^2m=m$ or $T^3m=m$.

Now, let $|M| \ge 4$. Since, there does not exist $m\in M$ such that $T^2m=m$ or $T^3m=m$ then, $T$ has no periodic points of prime period $2$ and $3$. 

Now, for all distinct points $p,q,r,s \in M$, we have
\begin{align*}
&d(Tp,Tq)+ d(Tq,Tr)+ d(Tr,Ts)+ d(Ts,Tp)\\
&\le \alpha (d(p,q)+ d(q,r)+ d(r,s)+ d(s,p)).
\end{align*}
This shows that $T$ is a perimetric contraction on quadrilaterals in $M$. Then by Theorem~\ref{T33}, it follows that $T$ admits at most three fixed points in $M$. Using the contraction condition, it can be shown that the fixed point is unique.
\end{proof}

Below, we provide a few examples to show the existence of a perimetric contraction on quadrilaterals but not a mapping contracting perimeter of triangles considering finite, countably infinite, and uncountably infinite metric spaces.  

\begin{example}\label{E39} 
Let $(M,d)$ be a metric space with $M= \left\{ 0, \frac{1}{3}, \frac{2}{3}, 1 \right\}$ where $d$ is the Euclidean metric.

Now, define the mapping $T$ as follows:
\begin{align*}
T(m)=
\begin{cases}
0, &\text{if } m=\{0, \frac{1}{3} \},\\
\frac{1}{3}, &\text{if } m= \frac{2}{3},\\
\frac{2}{3}, &\text{if } m=1.
\end{cases}
\end{align*}

Taking $a=\frac{1}{3}, b=\frac{2}{3}$, we can show that $T$ is not a contraction.
\vspace{0.3cm}

Again, for $a=\frac{1}{3}, b=\frac{2}{3}, c=1$, it contradicts (\ref{cpt})
and therefore, $T$ fails to be a mapping contracting perimeters of triangles.
\vspace{0.3cm}

Now, for any four distinct points of $M$, the the condition (\ref{pcq}) holds with  $\alpha \in \left[\frac{2}{3},1\right)$. As a result, $T$ is a perimetric contraction on quadrilaterals in $M$. Note that $T$ has no periodic points of prime order $2$ and $3$. Therefore by Theorem~\ref{T33}, $T$ has a fixed point viz., $0$.
\end{example}

In the next example, we consider a countably infinite metric space.

\begin{example}\label{E310}
Let $M=\{x^*,x_0,x_1, \dots \}$ with cardinality $\aleph_0$ and let $c \in \mathbb{R^+}$. The metric $d$ is defined on $X$ as follows:

\unitlength 1mm 
\linethickness{0.4pt}
\ifx\plotpoint\undefined\newsavebox{\plotpoint}\fi 
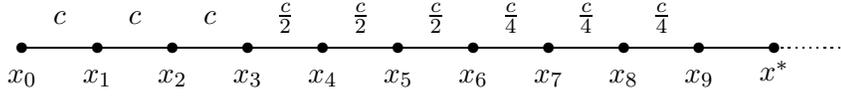
\begin{figure}[ht]
\begin{picture}(176.531,189)(0,0)
\put(20,181){\line(1,0){100}}
\multiput(120,181)(.95,0){7}{{\rule{.4pt}{.4pt}}}
\multiput(120,181)(.95,.0){11}{{\rule{.4pt}{.4pt}}}
\put(20,181){\circle*{1.5}}
\put(30,181){\circle*{1.5}}
\put(40,181){\circle*{1.5}}
\put(50,181){\circle*{1.5}}
\put(60,181){\circle*{1.5}}
\put(70,181){\circle*{1.5}}
\put(80,181){\circle*{1.5}}
\put(90,181){\circle*{1.5}}
\put(100,181){\circle*{1.5}}
\put(110,181){\circle*{1.5}}
\put(120,181){\circle*{1.5}}
\put(20,177){\makebox(0,0)[cc]{$x_0$}}
\put(30,177){\makebox(0,0)[cc]{$x_1$}}
\put(40,177){\makebox(0,0)[cc]{$x_2$}}
\put(50,177){\makebox(0,0)[cc]{$x_3$}}
\put(60,177){\makebox(0,0)[cc]{$x_4$}}
\put(70,177){\makebox(0,0)[cc]{$x_5$}}
\put(80,177){\makebox(0,0)[cc]{$x_6$}}
\put(90,177){\makebox(0,0)[cc]{$x_7$}}
\put(100,177){\makebox(0,0)[cc]{$x_8$}}
\put(110,177){\makebox(0,0)[cc]{$x_9$}}
\put(120,178){\makebox(0,0)[cc]{$x^*$}}
\put(25,185){\makebox(0,0)[cc]{$c$}}
\put(35,185){\makebox(0,0)[cc]{$c$}}
\put(45,185){\makebox(0,0)[cc]{$c$}}
\put(55,185){\makebox(0,0)[cc]{$\frac{c}{2}$}}
\put(65,185){\makebox(0,0)[cc]{$\frac{c}{2}$}}
\put(75,185){\makebox(0,0)[cc]{$\frac{c}{2}$}}
\put(85,185){\makebox(0,0)[cc]{$\frac{c}{4}$}}
\put(95,185){\makebox(0,0)[cc]{$\frac{c}{4}$}}
\put(105,185){\makebox(0,0)[cc]{$\frac{c}{4}$}}
\end{picture}
\vspace{-47em}
\caption{ The points in the space $(M,d)$ that are separated by the consecutive lengths.}
\end{figure}

\begin{align*}
d(x,y)=
\begin{cases}
\frac{c}{2^{[{\frac{n}{3}}]}}, &\text{if } x=x_n, y=x_{n+1},n=1,2,\dots,\\
\sum_{a=n}^{m-1} \frac{c}{2^{[{\frac{a}{3}}]}}, & \text{if } x=x_n, y=x_m, n+1<m,\\
6c- \sum_{a=0}^{n-1} \frac{c}{2^{[{\frac{a}{3}}]}}, & \text{if } x=x_n, y=x^*,\\
0, & \text{if } x=y,
\end{cases}
\end{align*}
where $[\cdot]$  is the box function.
Then, $(M,d)$ is a complete metric space.

Define a mapping $T:M \to M$ as $Tx_n=x_{n+1}$ for all $n=\mathbb{N} \cup \{0\}$ and $Tx^*=x^*$.

Since $d(Tx_{3n}, Tx_{3n+1})= d(x_{3n},x_{3n+1})$ for all $n=\mathbb{N} \cup \{0\}$, then $T$ is not a contraction.

Also,
\begin{align*}
&d(Tx_{3n}, Tx_{3n+1})+ d(Tx_{3n+1}, Tx_{3n+2})+ d(Tx_{3n+2}, Tx_{3n})\\
=&d(x_{3n}, x_{3n+1})+ d(x_{3n+1}, x_{3n+2})+ d(x_{3n+2},x_{3n}) 
\end{align*}
for all $n=\mathbb{N} \cup \{0\}$. Therefore, $T$ fails to be a mapping contracting perimeters of triangles. 

We now show that $T$ is a perimetric contraction on quadrilaterals in $M$. 

Consider the points $x_k, x_l, x_m, x^* \in X$ with $0 \le k < l < m$. Then, we have,
\begin{align*}
d(x_k, x_l)+ d(x_l, x_m)+ d(x_m, x^*)+ d(x^*, x_k)= 2d(x_k,x^*)
= 12c- 2 \sum_{a=0}^{k-1} \frac{c}{2^{[{\frac{a}{3}}]}} 
\end{align*}
and 
\begin{align*}
&d(Tx_k, Tx_l)+ d(Tx_l, Tx_m)+ d(Tx_m, Tx^*)+ d(Tx^*, Tx_k)\\
=&2d(Tx_k,Tx^*)= 2d(x_{k+1},x^*) =12c- 2 \sum_{a=0}^{k} \frac{c}{2^{[{\frac{a}{3}}]}}.
\end{align*}
Now, we have,
\begin{align}
d(x_0, x_n)= 
\begin{cases}
6c(1-({\frac{1}{2}})^p), &\text{if } n=3p,\\
6c(1-({\frac{1}{2}})^p)- \frac{c}{2^{p-1}}, & \text{if } n=3p-1,\\
6c(1-({\frac{1}{2}})^p)- \frac{c}{2^{p-2}}, & \text{if } n=3p-2.\label{3.11}
\end{cases}
\end{align}

Consider the ratio,
\begin{align*}
R_k=&\frac{d(Tx_k, Tx_l)+ d(Tx_l, Tx_m)+ d(Tx_m, Tx^*)+ d(Tx^*, Tx_k)}{d(x_k, x_l)+ d(x_l, x_m)+ d(x_m, x^*)+ d(x^*, x_k)}\\
=&\frac{12c- 2 \sum_{a=0}^{k} \frac{c}{2^{[{\frac{a}{3}}]}}}{12c- 2 \sum_{a=0}^{k-1} \frac{c}{2^{[{\frac{a}{3}}]}}}\\
=&1- \frac{\frac{c}{2^{\left[{\frac{k}{3}}\right]}}}{6c- \sum_{a=0}^{k-1} \frac{c}{2^{[{\frac{a}{3}}]}}}\\
=&\begin{cases}
\vspace{0.2cm}
1- \frac{\frac{c}{2^p}}{6c- 6c(1-({\frac{1}{2}})^p)}, &\text{if } k=3p,\\
\vspace{0.2cm}
1- \frac{\frac{c}{2^{p-1}}}{6c- 6c(1-({\frac{1}{2}})^p)+ \frac{c}{2^{p-1}}}, & \text{if } k=3p-1,\\
1- \frac{\frac{c}{2^{p-1}}}{6c- 6c(1-({\frac{1}{2}})^p)+ \frac{c}{2^{p-2}}}, & \text{if } k=3p-2,
\end{cases}\\
=&\begin{cases}
\frac{5}{6}, &\text{if } k=3p,\\
\frac{3}{4}, &\text{if } k=3p-1,\\
\frac{4}{5}, &\text{if } k=3p-2.
\end{cases}
\end{align*}

Now, let us consider the points $x_k, x_l, x_m, x_n \in X$ with $0 \le k < l < m < n$. Then, we have,
\begin{align*}
&d(x_k, x_l)+ d(x_l, x_m)+ d(x_m, x_n)+ d(x_n, x_k)= 2d(x_k,x_n)= 2 \sum_{a=k}^{n-1} \frac{c}{2^{[{\frac{a}{3}}]}}
\end{align*}
and
\begin{align*}
&d(Tx_k, Tx_l)+ d(Tx_l, Tx_m)+ d(Tx_m, Tx_n)+ d(Tx_n, Tx_k)\\
=&2d(Tx_k,Tx_n)\\
=&2d(x_{k+1},x_{n+1})\\
=&2d(x_k,x_n) - 2[d(x_k, x_{k+1})- d(x_n, x_{n+1})]\\
=&2 \sum_{a=k}^{n-1} \frac{c}{2^{[{\frac{a}{3}}]}} - 2\left(\frac{c}{2^{[{\frac{k}{3}]}}} - \frac{c}{2^{[{\frac{n}{3}]}}}\right).
\end{align*}
Consider the ratio,
\begin{align*}
R_{k,n}&= \frac{d(Tx_k, Tx_l)+ d(Tx_l, Tx_m)+ d(Tx_m, Tx_n)+ d(Tx_n, Tx_k)}{d(x_k, x_l)+ d(x_l, x_m)+ d(x_m, x_n)+ d(x_n, x_k)}\\
&= \frac{2 \sum_{a=k}^{n-1} \frac{c}{2^{[{\frac{a}{3}}]}} - 2\left(\frac{c}{2^{[{\frac{k}{3}]}}} - \frac{c}{2^{[{\frac{n}{3}]}}}\right)}{2 \sum_{a=k}^{n-1} \frac{c}{2^{[{\frac{a}{3}}]}}}\\
&=1- \frac{\frac{c}{2^{[{\frac{k}{3}]}}} - \frac{c}{2^{[{\frac{n}{3}]}}}}{\sum_{a=k}^{n-1} \frac{c}{2^{[{\frac{a}{3}}]}}}
\end{align*}
It is to be noted that $n\ge k+3$. Therefore
\begin{align}
\left[\frac{n}{3}\right] \ge \left[\frac{k}{3}\right]+1
\implies 2^{\left[\frac{n}{3}\right]} \ge 2.2^{\left[\frac{k}{3}\right]}
\implies \frac{1}{2^{\left[\frac{n}{3}\right]}} \le \frac{1}{2.2^{\left[\frac{k}{3}\right]}}
\implies \frac{c}{2^{\left[\frac{n}{3}\right]}} \le \frac{c}{2.2^{\left[\frac{k}{3}\right]}}\label{3.12}
\end{align}
Now from (\ref{3.11}), we can write,
\begin{align}
d(x_n, x^*)= 
\begin{cases}
{\frac{6c}{2^p}}, &\text{if } n=3p,\\
{\frac{6c}{2^p}}+ \frac{c}{2^{p-1}}, & \text{if } n=3p-1,\\
{\frac{6c}{2^p}}+ \frac{c}{2^{p-2}}, & \text{if } n=3p-2.\label{3.13}
\end{cases}
\end{align}

Therefore from (\ref{3.13}), we get,
\begin{align}
&d(x_n,x^*) \le 6 d(x_n, x_{n+1})\notag\\
\implies & d(x_k,x^*) \le 6 d(x_k, x_{k+1})\notag\\
\implies & d(x_k, x_n) \le d(x_k,x^*) \le 6 d(x_k, x_{k+1})\notag\\
\implies &\sum_{a=k}^{n-1} \frac{c}{2^{[{\frac{a}{3}}]}} \le 6 \frac{c}{2^{[{\frac{k}{3}]}}}.\label{3.14}
\end{align}

Consequently, from (\ref{3.12}) and (\ref{3.14}) we have
\begin{align*}
R_{k,n} \le 1- \frac{\frac{c}{2^{[\frac{k}{3}]}} - \frac{1}{2} \frac{c}{2^{[\frac{k}{3}]}}}{6 \frac{c}{2^{[\frac{k}{3}]}}} = \frac{11}{12}.
\end{align*}
Thus, the inequality (\ref{pcq}) holds for any four distinct points from $X$ with $\alpha= \frac{11}{12}= \max\{ \frac{5}{6}, \frac{3}{4}, \frac{4}{5}, \frac{11}{12} \}$. 

Therefore, $T$ is a perimetric contraction on quadrilaterals in $M$. Also, $T$ does not contain any periodic points of prime period $2$ and $3$. Hence, by Theorem~\ref{T33}, $T$ has a fixed point in $M$. Note that $x^* \in Fix(T)$.
\end{example}

In the next example, we consider an uncountably infinite metric space.

\begin{example}\label{E311}
Let $M=\{ -1, -\frac{2}{3}, -\frac{1}{3} \} \cup [0,1] \subset \mathbb{R}$ is a metric space equipped with the Euclidean metric $d$ and let $T:M \to M$ be a mapping defined as follows:
\begin{align*}
T(m)=
\begin{cases}
\frac{m}{2}, &\text{if } m \in [0,1],\\
0, &\text{if } m=- \frac{1}{3},\\
- \frac{1}{3}, &\text{if } m=- \frac{2}{3},\\
- \frac{2}{3}, &\text{if } m=-1.
\end{cases}
\end{align*}

Now, for $p=-\frac{1}{3}, q=-\frac{2}{3}$, we see that $T$ is not a contraction.

Again, for $p=-\frac{1}{3}, q=-\frac{2}{3}, r=-1$, we can show that $T$ is not a mapping contracting perimeters of triangles.

For any four distinct points of $M$, the the condition (\ref{pcq}) holds for $\alpha \in [\frac{2}{3}, 1)$. Thus, $T$ is a perimetric contraction on quadrilaterals in $M$. Also, $T$ does not contain any periodic points of prime periods 2 and 3. Therefore, by Theorem~\ref{T33}, $Fix(T)=\{0\}$.
\end{example}

\section{Kannan type perimetric contraction quadrilaterals} 
We begin the section by introducing the four-point analogue of the Kannan type contraction in the following way:

\begin{definition}\label{D41}
Let $(M,d)$ be a metric space with at least fore points. Then a mapping $T:M \to M$ is called a Kannan type perimetric contraction quadrilaterals on $M$ if there exists $\delta \in [0,\frac{1}{2})$ such that the following inequality holds for all distinct points $p,q,r,s \in M$
\begin{align}
&d(Tp,Tq)+ d(Tq,Tr)+ d(Tr,Ts)+ d(Ts,Tp)\notag\\
&\le \delta (d(p,Tp)+ d(q,Tq)+ d(r,Tr)+ d(s,Ts)) \label{qkc}
\end{align}
\end{definition}

The following result is a direct consequence of the definitions.

\begin{theorem}\label{T42}
Let $(M, d)$ be a metric space with at least four points and let $T: M \to M$ be a Kannan type mapping with $\delta \in [0,\frac{1}{4})$. Then $T$ is a Kannan type perimetric contraction on quadrilaterals.
\end{theorem}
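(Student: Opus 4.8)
The plan is direct: unfold the quadrilateral perimeter on the left-hand side of \eqref{qkc} into its four consecutive edges and bound each edge separately by the ordinary Kannan inequality. So I would fix four pairwise distinct points $p,q,r,s\in M$ and, using that $T$ is a Kannan type mapping with constant $\delta\in[0,\tfrac14)$, apply the defining inequality to the four pairs $(p,q)$, $(q,r)$, $(r,s)$ and $(s,p)$:
\begin{align*}
d(Tp,Tq)&\le \delta\bigl(d(p,Tp)+d(q,Tq)\bigr),\\
d(Tq,Tr)&\le \delta\bigl(d(q,Tq)+d(r,Tr)\bigr),\\
d(Tr,Ts)&\le \delta\bigl(d(r,Tr)+d(s,Ts)\bigr),\\
d(Ts,Tp)&\le \delta\bigl(d(s,Ts)+d(p,Tp)\bigr).
\end{align*}

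Next I would add these four inequalities. On the right-hand side each of the quantities $d(p,Tp)$, $d(q,Tq)$, $d(r,Tr)$, $d(s,Ts)$ appears in exactly two of the four bounds, so the sum collapses to $2\delta\bigl(d(p,Tp)+d(q,Tq)+d(r,Tr)+d(s,Ts)\bigr)$. Putting $\delta' := 2\delta$, the hypothesis $\delta\in[0,\tfrac14)$ gives $\delta'\in[0,\tfrac12)$, and the resulting estimate
\[
d(Tp,Tq)+d(Tq,Tr)+d(Tr,Ts)+d(Ts,Tp)\le \delta'\bigl(d(p,Tp)+d(q,Tq)+d(r,Tr)+d(s,Ts)\bigr)
\]
is exactly condition \eqref{qkc} with admissible constant $\delta'$. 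Since $p,q,r,s$ were arbitrary distinct points of $M$, this shows $T$ is a Kannan type perimetric contraction on quadrilaterals in the sense of Definition \ref{D41}.

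There is essentially no obstacle here; the only point requiring care is the bookkeeping of the constant. The edge-by-edge passage costs a factor of $2$, which is precisely why the statement assumes the sharper bound $\delta<\tfrac14$ rather than the classical $\delta<\tfrac12$: it guarantees $2\delta$ still lies in the admissible range $[0,\tfrac12)$. Note also that no distinctness of $p,q,r,s$ is needed merely to invoke the Kannan inequality (it holds for all pairs, and in any case the four pairs used are pairs of distinct points), while the distinctness demanded by Definition \ref{D41} is supplied by hypothesis.
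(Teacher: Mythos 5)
Your proof is correct and is exactly the argument the paper has in mind: the paper omits the proof, calling the theorem ``a direct consequence of the definitions,'' and the intended reasoning is precisely your edge-by-edge application of the Kannan inequality followed by summation, with the factor of $2$ explaining the hypothesis $\delta<\tfrac14$.
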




In the following result, we show that a sub-collection of perimetric contraction on quadrilaterals is contained in the collection of Kannan type perimetric contraction on quadrilaterals. 
\begin{theorem}\label{T43}
Let $(M,d)$ be a metric space with at least four points and let $T: M \to M$ be a perimetric contraction on quadrilaterals with $\delta \in [0,\frac{1}{5})$. Then $T$ is a Kannan type perimetric contraction on quadrilaterals.
\end{theorem}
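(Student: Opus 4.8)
The plan is to convert the bound on the perimeter of the image quadrilateral into a bound involving the four ``displacement'' distances $d(p,Tp),\dots,d(s,Ts)$ by a single application of the triangle inequality. Write the contraction constant of the perimetric contraction as $\alpha\in[0,\tfrac15)$ (this is the $\delta$ of the statement; see Definition~\ref{D31}), and set
\[
P := d(Tp,Tq)+d(Tq,Tr)+d(Tr,Ts)+d(Ts,Tp), \qquad S := d(p,Tp)+d(q,Tq)+d(r,Tr)+d(s,Ts).
\]
First I would estimate each edge of the original quadrilateral by inserting the corresponding image points, e.g. $d(p,q)\le d(p,Tp)+d(Tp,Tq)+d(Tq,q)$, and cyclically for $d(q,r)$, $d(r,s)$, $d(s,p)$. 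Adding the four inequalities, each term $d(\cdot,T\cdot)$ occurs exactly twice and each image edge exactly once, so
\[
d(p,q)+d(q,r)+d(r,s)+d(s,p) \le 2S + P.
\]

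Next I would substitute this into the defining inequality (\ref{pcq}) for a perimetric contraction on quadrilaterals, obtaining $P \le \alpha(2S+P)$, i.e. $(1-\alpha)P \le 2\alpha S$. Since $\alpha<\tfrac15<1$ we may divide by $1-\alpha>0$ to get
\[
P \le \frac{2\alpha}{1-\alpha}\, S .
\]
Finally I would note that $\varphi(\alpha):=\frac{2\alpha}{1-\alpha}$ is increasing on $[0,1)$ with $\varphi(0)=0$ and $\varphi(\tfrac15)=\tfrac12$, so that $\delta:=\frac{2\alpha}{1-\alpha}$ lies in $[0,\tfrac12)$. This $\delta$ is exactly the constant demanded in Definition~\ref{D41}, and the displayed inequality is precisely (\ref{qkc}) for the arbitrary distinct points $p,q,r,s\in M$; hence $T$ is a Kannan type perimetric contraction on quadrilaterals.

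I do not expect a genuine obstacle here: the whole argument is one clean round of the triangle inequality followed by a linear rearrangement. The only point requiring care is the bookkeeping of the constant — the hypothesis $\alpha<\tfrac15$ is exactly the threshold that forces the resulting Kannan-type constant $\frac{2\alpha}{1-\alpha}$ to stay strictly below $\tfrac12$. I would also remark that distinctness of $p,q,r,s$ is never used in the triangle-inequality step, so it is inherited verbatim from the hypothesis, and that the case $\alpha=0$ is trivial with $\delta=0$.
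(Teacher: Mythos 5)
Your proposal is correct and follows essentially the same route as the paper: insert the image points into each side of the original quadrilateral via the triangle inequality to get $d(p,q)+d(q,r)+d(r,s)+d(s,p)\le 2S+P$, rearrange $P\le\alpha(2S+P)$ to $P\le\frac{2\alpha}{1-\alpha}S$, and observe that $\alpha<\frac15$ forces $\frac{2\alpha}{1-\alpha}<\frac12$. Your bookkeeping of the constants matches the paper's exactly.
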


\begin{proof}
Let $T$ be a perimetric contraction on quadrilaterals with $\delta \in [0,\frac{1}{5})$ and let $w,x,y,z \in M$ be distinct. Then using (\ref{qkc}), we have
\begin{align*}
&d(Tp,Tq)+ d(Tq,Tr)+ d(Tr,Ts)+ d(Ts,Tp)\\
\le &\delta (d(p,q)+ d(q,r)+ d(r,s)+ d(s,p))\\
\le &\delta (d(p,Tp)+ d(Tp,Tq)+ d(Tq,q)+ d(q,Tq)+ d(Tq,Tr)+ d(Tr,r)\\
& + d(r,Tr)+ d(Tr,Ts)+ d(Ts,s)+ d(s,Ts)+ d(Ts,Tp)+ d(Tp,p)).
\end{align*}
This implies
\begin{align*}
&d(Tp,Tq)+ d(Tq,Tr)+ d(Tr,Ts)+ d(Ts,Tp)\\
&\le \frac{2\delta}{1-\delta} (d(p,Tp)+ d(q,Tq)+ d(r,Tr)+ d(s,Ts)).
\end{align*}
Thus, $T$ is a Kannan type perimetric contraction on quadrilaterals as $\frac{2\delta}{1-\delta} \in [0,\frac{1}{2})$.
\end{proof}
Now, we obtain a sufficient condition for the existence of fixed point(s) of Kannan type perimetric contraction quadrilaterals.
\begin{theorem}\label{T44}
Let $(M,d)$ be a complete metric space with at least four points, and let $T: M \to M$ be a Kannan type perimetric contraction on quadrilaterals. Then $T$ admits a fixed point if $T$ does not possess periodic points of prime period $2$ and $3$. The number of fixed points is at most three.
\end{theorem}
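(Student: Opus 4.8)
The plan is to adapt the proof of Theorem~\ref{T33}, replacing the Banach-type estimate by the Kannan-type inequality $(\ref{qkc})$. Fix $a_0 \in M$ and set $a_{n+1} = Ta_n$; if some $a_n$ is a fixed point we are done, so assume not. Then $a_n \ne a_{n+1}$ for all $n$, and since $T$ has no periodic points of prime period $2$ or $3$, every block of four consecutive terms $a_{n-1},a_n,a_{n+1},a_{n+2}$ consists of pairwise distinct points. Write $e_k := d(a_k,a_{k+1}) > 0$ and $\lambda_n := d(a_n,a_{n+1}) + d(a_{n+1},a_{n+2}) + d(a_{n+2},a_{n+3}) + d(a_{n+3},a_n)$.

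The first step is to apply $(\ref{qkc})$ to $(p,q,r,s) = (a_{n-1},a_n,a_{n+1},a_{n+2})$: the left-hand side is precisely $\lambda_n$, and the right-hand side is $\delta(e_{n-1}+e_n+e_{n+1}+e_{n+2})$. Since $d(a_{n+3},a_n) \ge 0$ we have $\lambda_n \ge e_n + e_{n+1} + e_{n+2}$, so
\[
(1-\delta)(e_n+e_{n+1}+e_{n+2}) \le \delta\, e_{n-1},
\qquad\text{hence}\qquad
e_n + e_{n+1} + e_{n+2} \le \frac{\delta}{1-\delta}\, e_{n-1}.
\]
As $\delta \in [0,\tfrac12)$, the ratio $c := \delta/(1-\delta)$ lies in $[0,1)$; in particular $e_n \le c^n e_0$, so the standard telescoping estimate $d(a_n,a_{n+p}) \le \sum_{j=0}^{p-1} e_{n+j} \le c^n e_0/(1-c)$ shows $\{a_n\}$ is Cauchy and hence converges to some $a^* \in M$ by completeness. (Since $e_n \to 0$ while $e_n>0$, the $a_n$ are moreover pairwise distinct, as an equality $a_j = a_m$ would make the sequence periodic from index $j$ on.)

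Next I would prove $Ta^* = a^*$. A short preliminary argument rules out $a^* = a_k$ for any $k$: feeding $(a_k,a_n,a_{n-1},a_{n-2})$ into $(\ref{qkc})$ for large $n$ (so these four points are distinct) yields $d(a_{k+1},a_{n+1}) \le \delta(e_k+e_n+e_{n-1}+e_{n-2})$, and letting $n\to\infty$ (using $a_{n+1}\to a^*=a_k$ and $e_n,e_{n-1},e_{n-2}\to 0$) gives $e_k \le \delta e_k$, so $e_k=0$, a contradiction. Thus $a^* \neq a_n$ for all $n$, and for large $n$ the points $a^*, a_{n-1}, a_{n-2}, a_{n-3}$ are distinct. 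Applying $(\ref{qkc})$ to this quadrilateral, the term $d(Ta^*,Ta_{n-1}) = d(Ta^*,a_n)$ sits on the left, whence
\[
d(a_n, Ta^*) \le \delta\big(d(a^*,Ta^*) + e_{n-1} + e_{n-2} + e_{n-3}\big).
\]
Combining this with $d(a^*,Ta^*) \le d(a^*,a_n) + d(a_n,Ta^*)$ gives $(1-\delta)\,d(a^*,Ta^*) \le d(a^*,a_n) + \delta(e_{n-1}+e_{n-2}+e_{n-3})$, and letting $n\to\infty$ forces $d(a^*,Ta^*)=0$, i.e.\ $a^*\in Fix(T)$.

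Finally, the cardinality bound is immediate: four distinct fixed points $p,q,r,s$ would make $(\ref{qkc})$ read $d(p,q)+d(q,r)+d(r,s)+d(s,p) \le 0$, which is impossible; hence $|Fix(T)| \le 3$. The only genuinely delicate point is the first estimate: one has to notice that applying the Kannan-type quadrilateral inequality to four \emph{consecutive} iterates leaves the entire closed perimeter $\lambda_n$ — including the ``long'' side $d(a_{n+3},a_n)$ — on the left while only a chain $e_{n-1},\dots,e_{n+2}$ of edge-lengths appears on the right, and that discarding the long side and solving for the edge-lengths is exactly what produces a contraction ratio below $1$. This is precisely where the hypothesis $\delta<\tfrac12$ (sharper than the $\delta<\tfrac23$ of the three-point Kannan case) is used.
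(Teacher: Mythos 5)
Your proposal is correct, and its overall skeleton (Picard iterates, distinctness of four consecutive terms via the no-period-$2$/$3$ hypothesis, a decay estimate on $e_n=d(a_n,a_{n+1})$, Cauchyness, passage to the limit, and the perimeter-equals-zero argument for at most three fixed points) matches the paper's. The one genuinely different ingredient is the key estimate. The paper keeps the long side $d(a_{i+4},a_{i+1})$ and bounds it from below via the triangle inequality so as to isolate $d(a_{i+3},a_{i+4})$, arriving at $e_{i+3}\le\frac{\delta}{2-\delta}(e_i+e_{i+1}+e_{i+2})\le\frac{3\delta}{2-\delta}\max\{e_i,e_{i+1},e_{i+2}\}$, a three-step max recursion that then requires the fractional-exponent bookkeeping $k_n\le k\lambda^{n/3-1}$. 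You instead simply discard the nonnegative long side and solve for the three remaining edges, getting the one-step bound $e_n+e_{n+1}+e_{n+2}\le\frac{\delta}{1-\delta}\,e_{n-1}$, hence $e_n\le c^n e_0$ with $c=\delta/(1-\delta)<1$. This is both simpler and quantitatively stronger, and it uses the threshold $\delta<\tfrac12$ in exactly the same essential way. A further point in your favour: you explicitly rule out $a^*=a_k$ before applying \eqref{qkc} to a quadruple containing $a^*$ (needed because the inequality is only assumed for four \emph{distinct} points), and your argument for this is sound; the paper's proof applies \eqref{qkc} to $(a^*,a_{n-1},a_{n-2},a_n)$ without addressing this, so you have in fact patched a small gap.
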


\begin{proof}
Let $T: M \to M$ be a Kannan type perimetric contraction on quadrilaterals on $M$ and let $T$ have no periodic points of prime period $2$ and $3$. 

Let $a_0 \in X$ be chosen arbitrarily. Define $Ta_0=a_1, Ta_1=a_2, \cdots, Ta_n=a_{n+1}, \cdots$. If $a_i$ is a fixed point of $T$ for any $i$, then there is nothing to show.

Since $a_i$ are not fixed points of $T$ and $T$ have no periodic points of prime period $2$ and $3$, following the same way as in Theorem~\ref{T33}, we can show that any four consecutive elements of the sequence $\{a_n\}$ are distinct. Now, we have


\begin{align*}
&d(Ta_i,Ta_{i+1})+d(Ta_{i+1},Ta_{i+2})+d(Ta_{i+2},Ta_{i+3})+d(Ta_{i+3},Ta_i)\\
&\le \delta (d(a_i,Ta_i)+ d(a_{i+1},Ta_{i+1})+ d(a_{i+2},Ta_{i+2})+ d(a_{i+3},Ta_{i+3}))\\
\implies &d(a_{i+1},a_{i+2})+d(a_{i+2},a_{i+3})+d(a_{i+3},a_{i+4})+d(a_{i+4},a_{i+1})\\
&\le \delta (d(a_i,a_{i+1})+ d(a_{i+1},a_{i+2})+ d(a_{i+2},a_{i+3})+ d(a_{i+3},a_{i+4}))\\
\implies &(1-\delta)d(a_{i+3},a_{i+4})\le \delta (d(a_i,a_{i+1})+ d(a_{i+1},a_{i+2})+ d(a_{i+2},a_{i+3}))\\ 
&\hspace{3.5cm}- (d(a_{i+1},a_{i+2})+d(a_{i+2},a_{i+3})+d(a_{i+4},a_{i+1})\\
\implies &(1-\delta)d(a_{i+3},a_{i+4})\le \delta (d(a_i,a_{i+1})+ d(a_{i+1},a_{i+2})+ d(a_{i+2},a_{i+3}))\\ 
&\hspace{3.5cm}- d(a_{i+3},a_{i+4})\\
\implies &(2-\delta)d(a_{i+3},a_{i+4})\le \delta (d(a_i,a_{i+1})+ d(a_{i+1},a_{i+2})+ d(a_{i+2},a_{i+3}))\\ 
\implies &d(a_{i+3},a_{i+4}) \le \frac{\delta}{2-\delta} (d(a_i,a_{i+1})+ d(a_{i+1},a_{i+2})+ d(a_{i+2},a_{i+3}))\\
\implies &d(a_{i+3},a_{i+4}) \le \frac{3\delta}{2-\delta} \max\{d(a_i,a_{i+1}),d(a_{i+1},a_{i+2}),d(a_{i+2},a_{i+3})\}.
\end{align*}

Let $\lambda= \frac{3\delta}{2-\delta}$. Then $\lambda \in [0,1)$ as $\delta \in [0,\frac{1}{2})$. Also, let $k_i=d(a_i,a_{i+1}), i \in \mathbb{N} \cup \{0\}$ and $k=\max\{k_1,k_2,k_3\}$. Then, for any $i \in \mathbb{N} \cup \{0\}$, we have
\begin{align*}
    &d(a_{i+3},a_{i+4}) \le \lambda \max\{d(a_i,a_{i+1}),d(a_{i+1},a_{i+2}),d(a_{i+2},a_{i+3})\}\\
    \implies & k_{i+3} \le \lambda \max\{k_i,k_{i+1},k_{i+2}\}.
\end{align*}

Consequently, we have
\begin{align*}
&k_1 \le k, k_2 \le k, k_3 \le k,
k_4 \le \lambda k, k_5 \le \lambda k, k_6 \le \lambda k, k_7 \le {\lambda}^2 k, \cdots.
\end{align*}

Since $\lambda <1$, so we have
\begin{align*}
&k_1 \le k, k_2 \le k, k_3 \le k,
k_4 \le {\lambda}^{\frac{1}{3}} k, k_5 \le {\lambda}^{\frac{2}{3}} k, k_6 \le \lambda k, k_7 \le {\lambda}^{\frac{4}{3}} k, \cdots.\\
\implies & k_n \le k {\lambda}^{\frac{n}{3}-1}, \text{ for all } n \in \mathbb{N} \text{ with } n \ge 4.
\end{align*}

Now, for all $n \in \mathbb{N} \cup \{0\}$ and for any $p=1,2,\cdots$, we have
\begin{align*}
d(a_n,a_{n+p}) &\le d(a_n,a_{n+1})+ d(a_{n+1},a_{n+2})+ \dots + d(a_{n+p-1},a_{n+p})\\
&\le k_n+ k_{n+1}+ \cdots + k_{n+p-1}\\
&\le k(\lambda^{\frac{n}{3}-1} + \lambda^{\frac{n+1}{3}-1} + \cdots + \lambda^{\frac{n+p-1}{3}-1}) \\
&\le k \lambda^{\frac{n}{3}-1} \dfrac{1- \lambda^{\frac{p}{3}}}{1- \lambda^{\frac{1}{3}}}.
\end{align*}

Therefore, $d(a_n,a_{n+p}) \to 0$ as $n \to \infty$ and for any $p=1,2, \cdots$. Thus, $\{a_n\}$ is a Cauchy sequence in $M$ and consequently, by completeness of $M$, there exists an $a^* \in M$ such that $a_n \to a^*$ as $n\to \infty$. 

Now,
\begin{align*}
&d(a^*,Ta^*)\\
&\le d(a^*,a_n)+ d(a_n,Ta^*)\\
&\le d(a^*,a_n)+ d(Ta_{n-1},Ta^*)\\
&\le d(a^*,a_n)+ d(Ta^*,Ta_{n-1})+ d(Ta_{n-1},Ta_{n-2})+ d(Ta_{n-2},Ta_n)+ d(Ta_n,Ta^*)\\
&\le d(a^*,a_n)+ \delta (d(a^*,Ta^*)+d(a_{n-1},Ta_{n-1})+ d(a_{n-2},Ta_{n-2})+ d(a_n,Ta_n).
\end{align*}
This implies that
\begin{align*}
(1-\delta) d(a^*,Ta^*) &\le d(a^*,a_n)+ \delta (d(a_{n-1},a_n)+ d(a_{n-2},a_{n-1})+ d(a_n,a_{n+1}).
\end{align*}

Taking $n \to \infty$, we get $Ta^*=a^*$ \textit{i.e.} $a^* \in Fix(T)$.

If possible, suppose that $T$ has four distinct fixed points say $p,q,r,s \in M$. 
Then from (\ref{qkc}), we have 
\begin{align*}
d(p,q)+ d(q,r)+ d(r,s)+ d(s,p) \le 0,
\end{align*}
which contradicts the fact that $p,q,r,s$ all are distinct. 

Thus, $T$ can have at most three fixed points. 
\end{proof}

Below, we provide an example to show the existence of a Kannan type perimetric contraction on quadrilaterals with exactly three fixed points.  

\begin{example}\label{E45} 
Let $M=\{a,b,c,d \}$ and let the metric $d$ be defined on $M$ by,

$d(a,b)= d(b,c)= d(a,c)= 1$ and $d(a,d)= d(b,d)= d(c,d)= 8$.

We now define the mapping $T:M \to M$ by $Ta=a, Tb=b, Tc=c, Td=a$.

Then $(M,d)$ is a complete metric space and $T$ is a Kannan type perimetric contraction on quadrilaterals on $M$. Also, $T$ does not contain any periodic points of prime period $2$ and $3$. Therefore by Theorem~\ref{T44}, $T$ has three fixed points and $Fix(T)= \{a,b,c\}$.
\end{example}
While the converse of Theorem ~\ref{T44} does not necessarily hold in general, a straightforward proof of a partial converse exists, which we omit here. 
\begin{theorem}\label{T46}
Let $(M,d)$ be a complete metric space with at least four points, and let $T: M \to M$ be a Kannan type perimetric contraction quadrilaterals. If $T$ possesses a fixed point in $M$, then $T$ does not possess periodic points of prime period $3$. 
\end{theorem}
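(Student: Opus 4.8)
The plan is to argue by contrapositive: assume $T$ has a fixed point $p \in M$ and also a periodic point $r$ of prime period $3$, and derive a contradiction with the assumption $\delta \in [0,\tfrac12)$ in \eqref{qkc}. Write the orbit of $r$ as $r, Tr = b, Tb = c$ with $Tc = r$, and note that $p, r, b, c$ are pairwise distinct: indeed $p$ is fixed while $r, b, c$ are not, and $r \ne b \ne c \ne r$ since $r$ has \emph{prime} period $3$. So these four points are legitimate inputs to \eqref{qkc}.

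Next I would apply the defining inequality \eqref{qkc} to suitable cyclic orderings of $\{p, r, b, c\}$. Applying it once to the quadruple $(p, r, b, c)$ gives
\[
d(Tp,Tr) + d(Tr,Tb) + d(Tb,Tc) + d(Tc,Tp)
\le \delta\bigl(d(p,Tp) + d(r,Tr) + d(b,Tb) + d(c,Tc)\bigr),
\]
and since $Tp = p$, $Tr = b$, $Tb = c$, $Tc = r$, the left side equals $d(p,b) + d(b,c) + d(c,r) + d(r,p)$ while the right side equals $\delta\bigl(d(r,b) + d(b,c) + d(c,r)\bigr)$ (the term $d(p,Tp) = 0$ drops out). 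To extract a contradiction I expect to need more than one ordering: I would also apply \eqref{qkc} to $(p, b, c, r)$ and to $(p, c, r, b)$, obtaining analogous inequalities whose left-hand sides are sums of the pairwise distances among $\{p, r, b, c\}$ (each a full perimeter of some quadrilateral on these four vertices) and whose right-hand sides are all equal to $\delta\bigl(d(r,b)+d(b,c)+d(c,r)\bigr)$, because the sum $d(p,Tp)+d(r,Tr)+d(b,Tb)+d(c,Tc)$ does not depend on the ordering. Adding the three resulting inequalities, the combined left-hand side is a positive multiple of the total perimeter $P := d(p,r)+d(p,b)+d(p,c)+d(r,b)+d(b,c)+d(c,r) > 0$, and in particular it bounds below some positive multiple of $d(r,b)+d(b,c)+d(c,r) > 0$; the combined right-hand side is $3\delta\bigl(d(r,b)+d(b,c)+d(c,r)\bigr)$. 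Comparing coefficients forces $\delta$ to be at least some constant $\ge \tfrac12$, contradicting $\delta < \tfrac12$.

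The main obstacle is bookkeeping: one must choose the cyclic orderings so that, after summation, the coefficient of $d(r,b)+d(b,c)+d(c,r)$ appearing on the left strictly exceeds $3\delta$ for every admissible $\delta$, i.e.\ exceeds $\tfrac32$. Each of the three quadrilaterals on vertices $\{p,r,b,c\}$ has perimeter consisting of four of the six pairwise distances, and summing the three perimeters counts each of the six distances exactly twice, giving $2P$ on the left; since $P \ge d(r,b)+d(b,c)+d(c,r)$, the left side is at least $2\bigl(d(r,b)+d(b,c)+d(c,r)\bigr)$, while the right is $3\delta$ times the same quantity. As $2 > 3\delta$ would \emph{not} immediately contradict $\delta < \tfrac12$ (since $3 \cdot \tfrac12 = \tfrac32 < 2$), I instead compare directly: $2\bigl(d(r,b)+d(b,c)+d(c,r)\bigr) \le 2P \le 3\delta\bigl(d(r,b)+d(b,c)+d(c,r)\bigr)$ forces $\delta \ge \tfrac23 > \tfrac12$, the desired contradiction. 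Thus no such $r$ can exist, which is exactly the claim. Finally I would double-check the edge case needed to apply \eqref{qkc} at all—namely that $M$ genuinely has the four distinct points $p, r, b, c$—which is automatic here since $r$ has prime period $3$ and $p$ is fixed.
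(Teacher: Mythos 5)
Your proof is correct. Note that the paper itself \emph{omits} the proof of this theorem (it is announced as a ``straightforward'' partial converse whose proof is left out), so there is no printed argument to compare against; your proposal validly fills that gap. The key checks all go through: $p,r,b,c$ are pairwise distinct (if $b=c$ then $b$ would be fixed and $r=Tc=Tb=b$, contradicting $r\neq b$); the right-hand side of \eqref{qkc} equals $\delta S$ with $S=d(r,b)+d(b,c)+d(c,r)>0$ for every ordering, since $d(p,Tp)=0$ and the Kannan-type sum is symmetric in the four points; and the three image quadruples $(p,b,c,r)$, $(p,c,r,b)$, $(p,r,b,c)$ are precisely the three distinct cyclic orderings of $\{p,r,b,c\}$, so each of the six pairwise distances occurs exactly twice and the summed left-hand side is $2P$. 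Then $2S\le 2P\le 3\delta S$ forces $\delta\ge\tfrac{2}{3}$, contradicting $\delta<\tfrac{1}{2}$. As a minor simplification, a single application of \eqref{qkc} to $(p,r,b,c)$ already suffices: by the triangle inequality $d(p,b)+d(r,p)\ge d(r,b)$, so the left side $d(p,b)+d(b,c)+d(c,r)+d(r,p)\ge S$, which forces $\delta\ge 1$ directly and spares you the bookkeeping over three orderings.
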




It is obvious from Example~\ref{E45} that fixed points of Kannan type perimetric contraction on quadrilaterals may not be unique. To confirm the existence of a unique fixed point of such a mapping, we must take an infinite complete metric space which is our next result:
\begin{proposition}\label{P47}
Let $(M,d)$ be a complete metric space and $T: M \to M$ be a Kannan type perimetric contraction quadrilaterals. If $M$ contains infinitely many points such that the iterative sequence $m_0, m_1=Tm_0, m_2=Tm_1,\dots $, converges to a point $\xi$ with $\xi \neq m_n$; for all $n \in \mathbb{N} \cup \{0\}$. Then $\xi$ is the unique fixed point of $T$.
\end{proposition}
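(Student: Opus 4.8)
The plan is to mirror the proof of Proposition~\ref{P37}, replacing the perimetric inequality by the Kannan-type inequality (\ref{qkc}) in the final limiting step.

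The first step is a preliminary observation: the points $m_0,m_1,m_2,\dots$ are pairwise distinct. Indeed, if $m_j=m_k$ for some $j<k$, then $\{m_n\}$ would be eventually periodic, hence, being convergent, eventually constant; this would force $\xi=\lim_n m_n=m_n$ for all large $n$, contradicting the hypothesis $\xi\neq m_n$. In particular, for every $n\geq 2$ the four points $\xi,m_{n-2},m_{n-1},m_n$ are pairwise distinct (using also $\xi\neq m_j$), so (\ref{qkc}) may legitimately be applied to them. Running the same computation as in the proof of Theorem~\ref{T44}, with the Picard sequence $\{a_n\}$ there replaced by $\{m_n\}$ here --- that is, bounding $d(\xi,T\xi)\leq d(\xi,m_n)+d(Tm_{n-1},T\xi)$, then enlarging $d(Tm_{n-1},T\xi)$ to the four-point perimeter $d(T\xi,Tm_{n-1})+d(Tm_{n-1},Tm_{n-2})+d(Tm_{n-2},Tm_n)+d(Tm_n,T\xi)$ and applying (\ref{qkc}) to $\xi,m_{n-1},m_{n-2},m_n$ --- one arrives at $(1-\delta)\,d(\xi,T\xi)\leq d(\xi,m_n)+\delta\bigl(d(m_{n-1},m_n)+d(m_{n-2},m_{n-1})+d(m_n,m_{n+1})\bigr)$, and letting $n\to\infty$ gives $d(\xi,T\xi)=0$. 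Hence $\xi\in Fix(T)$.

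For uniqueness I argue by contradiction. Suppose $\eta$ is a fixed point with $\eta\neq\xi$. If $\eta=m_i$ for some $i$, then $m_{i+1}=Tm_i=T\eta=\eta=m_i$, contradicting the distinctness of the $m_n$ (equivalently, it would give $\xi=\lim_n m_n=\eta$). Thus $\eta\neq m_i$ for all $i$, and therefore $\xi,\eta,m_{i-1},m_i$ are four pairwise distinct points for every $i\geq 1$. Applying (\ref{qkc}) to these four points and using $T\xi=\xi$, $T\eta=\eta$, $Tm_{i-1}=m_i$, $Tm_i=m_{i+1}$, together with $d(\xi,T\xi)=d(\eta,T\eta)=0$, yields
\begin{equation*}
d(\xi,\eta)+d(\eta,m_i)+d(m_i,m_{i+1})+d(m_{i+1},\xi)\leq\delta\bigl(d(m_{i-1},m_i)+d(m_i,m_{i+1})\bigr).
\end{equation*}
Since $m_n\to\xi$, the right-hand side tends to $0$ as $i\to\infty$, while the left-hand side tends to $d(\xi,\eta)+d(\eta,\xi)=2\,d(\xi,\eta)>0$. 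This contradiction forces $\eta=\xi$, so $T$ has a unique fixed point.

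The argument is essentially routine; the only points needing care are the bookkeeping of distinctness required to apply (\ref{qkc}) legitimately --- for which the preliminary observation that $\{m_n\}$ has no repeated terms is the crucial input --- and the remark that, in the Kannan setting, the terms $d(\xi,T\xi)$ and $d(\eta,T\eta)$ on the right-hand side vanish identically, which is precisely what makes the inequality collapse in the limit and produce the contradiction.
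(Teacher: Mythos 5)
Your proof is correct and follows essentially the same route as the paper: the existence part repeats the limiting computation from Theorem~\ref{T44}, and the uniqueness part applies (\ref{qkc}) to a quadruple consisting of $\xi$, $\eta$ and two Picard iterates, exploiting that $d(\xi,T\xi)=d(\eta,T\eta)=0$ so the right-hand side vanishes in the limit while the left-hand side tends to $2d(\xi,\eta)$. Your preliminary observation that the $m_n$ are pairwise distinct (needed to apply the four-point inequality legitimately) is a welcome piece of care that the paper leaves implicit.
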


\begin{proof}
It follows from Theorem~\ref{T44} that $T$ has a fixed point $\xi$. Let $\eta$ be another fixed point of $T$. Then $\eta \neq m_n$, for all $n \in \mathbb{N} \cup \{0\}$, otherwise we have $\xi = \eta$ . Therefore $\xi, \eta$, and $ m_n$ are all distinct, for all $n \in \mathbb{N} \cup \{0\}$.

Let, for all $n \in \mathbb{N}\cup \{0\}$,
\begin{align*}
&d(T\xi, Tm_n)+ d(Tm_n, T\eta)+ d(T\eta,Tm_{n+1})+ d(Tm_{n+1},T\xi)\\
&\le \delta_n (d(\xi, T\xi)+ d(m_n, Tm_{n})+ d(\eta, T\eta)+ d(m_{n+1}, Tm_{n+1})).
\end{align*}
This implies,
\begin{align*}
&d(\xi, m_{n+1})+ d(m_{n+1}, \eta)+ d(\eta,m_{n+2})+ d(m_{n+2},\xi)\\
&\le \delta_n (d(m_n, m_{n+1})+  d(m_{n+1}, m_{n+2})).
\end{align*}
Now, letting $n \to \infty$, we get 
\begin{align*}
2 d(\xi, \eta) \le 0,
\end{align*}
which is a contradiction to the fact that $\xi \neq \eta$. 

Therefore, $T$ has a unique fixed point in $M$.
\end{proof}
Below, we mention an example of \cite{PB24} to distinguish the classes of Kannan type mappings, generalized Kannan type mappings, and Kannan type perimetric contraction on quadrilaterals.
\begin{example}\label{E48}\cite{PB24}
Let $M=[0,1]$ and let $d$ be the Euclidean metric on $M$. Consider a mapping $T:M \to M$ defined by $T(x)=\frac{x}{k}$ for some $k>1$.

In \cite{PB24}, it is shown that $T$ is a Kannan type mapping for $k>3$ and $T$ is a generalized Kannan type mapping for $k>4$.

Without loss of generality, let us suppose that $a,b,c,d \in [0,1]$ with $a>b>c>d$. Therefore, by (\ref{qkc}) we have 
\begin{align*}
&\frac{1}{k}(a-b+b-c+c-d+a-d) \le \lambda (1-\frac{1}{k}) (a+b+c+d)\\
\implies &2(a-d) \le \lambda (k-1) (a+b+c+d)\\
\implies &a-d \le \frac{\lambda}{2} (k-1) (a+b+c+d).
\end{align*}
The above inequality holds for all $a,b,c,d \in [0, 1]$ with $a>b>c>d$ if and only if $\frac{\lambda}{2} (k-1)\ge 1$.
Also, $T$ is a Kannan type perimetric contraction on quadrilaterals if and only if $\lambda \in [0,\frac{1}{2})$.
Consequently,
\begin{align*}
\frac{1}{2} > \lambda \ge \frac{2}{k-1}.
\end{align*}
As a result, $T$ is a Kannan type perimetric contraction on quadrilaterals for $k>5$.
\end{example}
Now, we give an example in support of Theorem~\ref{T44}.
\begin{example}\label{E49}
Let $M=\{a,b,c,d \}$ and the metric $d$ be defined by 
\begin{align*}
d(x,y)=
\begin{cases}
0, &\text{if } x=y,\\
1, &\text{if } (x,y)=(a,b) \text{ or } (x,y)=(b,a),\\
3, &\text{otherwise}.
\end{cases}
\end{align*}
Then, $(M,d)$ is a complete metric space.\\
Consider $T:M \to M$ be defined by $Ta=a, Tb=b, Tc=b, Td=a$.\\
Taking $x=a$ and $y=b$, it can be shown that $T$ is not a Kannan type mapping. 

If $T$ is a generalized Kannan type mapping, then we have
\begin{align*}
&d(Ta,Tb)+ d(Tb,Tc)+ d(Tc,Ta) \le \delta (d(a,Ta)+ d(b,Tb)+ d(c,Tc))\\
\implies &d(a,b)+ d(b,b)+ d(b,a) \le \delta (d(a,a)+ d(b,b)+ d(c,b))\\
\implies &2 d(a,b) \le \delta  d(b,c)\\
\implies & 2 \le 3\delta,
\end{align*}
which is a contradiction to (\ref{gkc}).
Thus, $T$ is not a generalized Kannan type contraction.

Since $T$ satisfy $(\ref{qkc})$ with $\delta \in [\frac{1}{3}, \frac{1}{2})$, it follows that $T$ is a Kannan type perimetric contraction on quadrilaterals. 
Note that, $T$ does not contain any periodic point of prime period $2$ and $3$. Therefore by Theorem~\ref{T44}, $T$ has fixed points and $Fix(T)=\{a,b\}$.
\end{example}
Below, we provide an example to show that containing periodic points of prime period $2$ restricts from attaining a fixed point.
\begin{example}\label{E410}
Let $(M,d)$ be a metric space with $M=\{p,q,r,s\}$ and the metric $d$ be defined as 
\begin{align*}
d(x,y)=
\begin{cases}
0, &\text{if } x=y,\\
1, &\text{if } (x,y)=(p,r) \text{ or } (x,y)=(r,p),\\
3, &\text{otherwise}.
\end{cases}
\end{align*}
Let us define the mapping $T$ as $Tp=r, Tq=s, Tr=p, Ts=r$.\\
Then, $T$ satisfy $(\ref{qkc})$ with $\delta \in [\frac{1}{3}, \frac{1}{2})$. Thus $T$ is a Kannan type perimetric contraction on quadrilaterals. Also, $p$ and $r$ are two periodic points of $T$ of prime period $2$. Note that, $T$ has no fixed point in $M$.
\end{example}

\section{Chatterjea type perimetric contraction quadrilaterals}
We begin the section with the introduction of the four-point analogue of Chatterjea type contraction as follows:

\begin{definition}\label{D51}
Let $(M,d)$ be a metric space with at least four points. Then a mapping $T: M \to M$ is said to be a Chatterjea type perimetric contraction on quadrilaterals if there exists $\lambda \in [0,\frac{1}{7})$ such that 
\begin{align}
&d(Tp,Tq)+ d(Tq,Tr)+ d(Tr,Ts)+ d(Ts,Tp)\notag\\
&\le \lambda (d(p,Tq)+ d(p,Ts)+ d(q,Tp)+ d(q,Tr)\notag\\
&+ d(r,Tq)+ d(r,Ts)+ d(s,Tr)+ d(s,Tp)) \label{qcc}
\end{align}
for all distinct points $p,q,r,s \in M$.
\end{definition}
The following result is a direct consequence of the definitions.
\begin{theorem}\label{T52}
Let $(M, d)$ be a metric space with at least four points and let $T: M \to M$ be a Chatterjea type mapping with $\lambda \in [0,\frac{1}{7})$. Then $T$ is a Chatterjea type perimetric contraction on quadrilaterals.
\end{theorem}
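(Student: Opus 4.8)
The plan is to exploit the defining inequality of a Chatterjea type mapping along the four sides of an arbitrary quadrilateral and then add the resulting estimates. Recall that $T$ being a Chatterjea type mapping means
$d(Tx,Ty)\le\lambda\big(d(x,Ty)+d(y,Tx)\big)$
holds for all distinct $x,y\in M$, where by hypothesis $\lambda\in[0,\frac17)$.

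First I would fix four pairwise distinct points $p,q,r,s\in M$ and record the Chatterjea estimate for each of the consecutive pairs $(p,q)$, $(q,r)$, $(r,s)$, $(s,p)$:
\begin{align*}
d(Tp,Tq)&\le\lambda\big(d(p,Tq)+d(q,Tp)\big),\\
d(Tq,Tr)&\le\lambda\big(d(q,Tr)+d(r,Tq)\big),\\
d(Tr,Ts)&\le\lambda\big(d(r,Ts)+d(s,Tr)\big),\\
d(Ts,Tp)&\le\lambda\big(d(s,Tp)+d(p,Ts)\big).
\end{align*}
Adding these four inequalities, the left-hand side is precisely the perimeter $d(Tp,Tq)+d(Tq,Tr)+d(Tr,Ts)+d(Ts,Tp)$ of the image quadrilateral, and the right-hand side equals $\lambda$ times the eight distances $d(p,Tq),\,d(q,Tp),\,d(q,Tr),\,d(r,Tq),\,d(r,Ts),\,d(s,Tr),\,d(s,Tp),\,d(p,Ts)$, which --- after reordering --- are exactly the eight terms in the bracket on the right-hand side of (\ref{qcc}). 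Since $\lambda\in[0,\frac17)$, this shows that $T$ satisfies (\ref{qcc}) with the same constant, so $T$ is a Chatterjea type perimetric contraction on quadrilaterals in the sense of Definition \ref{D51}.

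I do not expect a genuine obstacle here; the argument is a bookkeeping verification and is exactly what the phrase ``direct consequence of the definitions'' signals. The one point worth highlighting is that, in contrast with the passage from a Kannan type mapping to a Kannan type perimetric contraction on quadrilaterals (Theorem \ref{T42}), where summing the four defining inequalities produces an overall factor $2$ and hence forces the sharper bound $\delta<\frac14$, the four Chatterjea inequalities here generate eight \emph{distinct} cross-distances, each with coefficient $1$, which coincide term-by-term with the eight summands of (\ref{qcc}). Thus no blow-up of the constant occurs, and only the normalization built into Definition \ref{D51} itself (namely $\lambda<\frac17$) needs to be assumed.
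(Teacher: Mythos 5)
Your proof is correct and is exactly the argument the paper intends (the paper omits it, calling the result ``a direct consequence of the definitions''): summing the Chatterjea inequality over the four consecutive pairs $(p,q)$, $(q,r)$, $(r,s)$, $(s,p)$ reproduces precisely the eight cross-distances in (\ref{qcc}) with the same constant $\lambda$. Your side remark explaining why no inflation of the constant occurs here, in contrast with the Kannan case of Theorem \ref{T42}, is also accurate.
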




Below, we establish a result to show that a sub-collection of mapping contracting perimeter of quadrilaterals is contained in the collection of Chatterjea type perimetric contraction on quadrilaterals.
\begin{theorem}\label{T53}
Consider a complete metric space $(M, d)$ with at least four points and let $T: M \to M$ be a mapping contracting perimeter of quadrilaterals with $\lambda \in [0,\frac{1}{8})$. Then $T$ is a Chatterjea type perimetric contraction on quadrilaterals.
\end{theorem}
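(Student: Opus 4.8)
The plan is to mimic the argument of Theorem~\ref{T43}: bound the ordinary quadrilateral perimeter $P := d(p,q)+d(q,r)+d(r,s)+d(s,p)$ above by the Chatterjea-type sum $S$ appearing on the right of (\ref{qcc}) together with the contracted perimeter $Q := d(Tp,Tq)+d(Tq,Tr)+d(Tr,Ts)+d(Ts,Tp)$, and then absorb the $Q$ term using the hypothesis.

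Fix distinct $p,q,r,s \in M$. First I would treat the side $d(p,q)$ by two successive triangle inequalities through image points: $d(p,q) \le d(p,Tq) + d(q,Tq)$, and then $d(q,Tq) \le d(q,Tp) + d(Tp,Tq)$, giving $d(p,q) \le d(p,Tq) + d(q,Tp) + d(Tp,Tq)$. Doing the cyclically analogous thing on the other three sides — namely $d(q,r) \le d(q,Tr) + d(r,Tq) + d(Tq,Tr)$, $d(r,s) \le d(r,Ts) + d(s,Tr) + d(Tr,Ts)$, and $d(s,p) \le d(s,Tp) + d(p,Ts) + d(Ts,Tp)$ — and adding the four inequalities, each of the eight mixed terms on the right of (\ref{qcc}) appears exactly once and each of the four terms of $Q$ appears exactly once; hence $P \le S + Q$.

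Next, since $T$ contracts perimeters of quadrilaterals with constant $\lambda < \tfrac18$, we have $Q \le \lambda P \le \lambda(S+Q)$, so $(1-\lambda)Q \le \lambda S$, i.e. $Q \le \tfrac{\lambda}{1-\lambda} S$. Finally, the map $\lambda \mapsto \tfrac{\lambda}{1-\lambda}$ is increasing and $\tfrac{\lambda}{1-\lambda} < \tfrac17$ precisely when $\lambda < \tfrac18$; therefore $T$ satisfies (\ref{qcc}) with the admissible constant $\lambda' = \tfrac{\lambda}{1-\lambda} \in [0,\tfrac17)$, which is the assertion.

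The only point requiring care — and the single place the argument could slip — is the bookkeeping in the second step: the intermediate points must be chosen so that exactly the eight ``cyclically adjacent'' mixed terms $d(p,Tq), d(q,Tp), d(q,Tr), d(r,Tq), d(r,Ts), d(s,Tr), d(s,Tp), d(p,Ts)$ are produced and none of the forbidden ones such as $d(p,Tr)$ or $d(q,Ts)$; the uniform pattern $d(x,y) \le d(x,Ty) + d(y,Tx) + d(Tx,Ty)$ applied along the cycle $p\to q\to r\to s\to p$ achieves precisely this. Note that completeness of $M$ is not actually needed for this implication.
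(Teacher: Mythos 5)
Your proposal is correct and follows essentially the same route as the paper: the same cyclic decomposition $d(x,y)\le d(x,Ty)+d(Ty,Tx)+d(Tx,y)$ along $p\to q\to r\to s\to p$ yielding $P\le S+Q$, the same absorption of $Q$ to get the constant $\tfrac{\lambda}{1-\lambda}<\tfrac17$. Your closing observation that completeness of $M$ is never used is also accurate.
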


\begin{proof}
Let $p,q,r,s \in M$ be distinct. Then, from (\ref{pcq}) we get
\begin{align*}
&d(Tp,Tq)+ d(Tq,Tr)+ d(Tr,Ts)+ d(Ts,Tp)\\
&\le \lambda (d(p,q)+ d(q,r)+ d(r,s)+ d(s,p))\\
& \le \lambda (d(p,Tq)+ d(Tq,Tp)+ d(Tp,q)+ d(q,Tr)+ d(Tr,Tq)+ d(Tq,r)\\
&+ d(r,Ts)+ d(Ts,Tr)+ d(Tr,s)+ d(s,Tp)+ d(Tp,Ts)+ d(Ts,p)).
\end{align*}
This implies
\begin{align*}
&d(Tp,Tq)+ d(Tq,Tr)+ d(Tr,Ts)+ d(Ts,Tp)\\
&\le \frac{\lambda}{1-\lambda} (d(p,Tq)+ d(p,Ts)+ d(q,Tp)+ d(q,Tr)\\
&\hspace{1.5cm}+ d(r,Tq)+ d(r,Ts)+ d(s,Tr)+ d(s,Tp)).
\end{align*}
Thus, $T$ is a Chatterjea type perimetric contraction on quadrilaterals as $\frac{\lambda}{1-\lambda} \in [0,\frac{1}{7})$.
\end{proof}
Now, we present the following result that ensures the existence of fixed point(s) of Chatterjea type perimetric contraction on quadrilaterals.
\begin{theorem}\label{T54}
Consider a complete metric space $(M,d)$ with at least four points, and let $T: M \to M$ be a Chatterjea type perimetric contraction on quadrilaterals on $M$. Then $T$ attains a fixed point if $T$ does not possess periodic points of prime period $2$ and $3$. The number of fixed points is at most three.
\end{theorem}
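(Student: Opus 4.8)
The plan is to follow the template established in the proofs of Theorem~\ref{T33} and Theorem~\ref{T44}. Fix an arbitrary $a_0 \in M$ and define the Picard iterates $a_{n+1} = Ta_n$. If some $a_n$ is a fixed point of $T$, there is nothing to prove, so assume no $a_n$ is fixed. Then, exactly as in Theorem~\ref{T33}, the assumption that $T$ has no periodic point of prime period $2$ or $3$ forces any four consecutive terms $a_i, a_{i+1}, a_{i+2}, a_{i+3}$ to be pairwise distinct, so that the defining inequality (\ref{qcc}) may legitimately be applied to them.

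\emph{Key estimate.} Writing $b_j = d(a_j, a_{j+1})$, I would substitute $(p,q,r,s) = (a_i, a_{i+1}, a_{i+2}, a_{i+3})$ into (\ref{qcc}). The left-hand side becomes $b_{i+1} + b_{i+2} + b_{i+3} + d(a_{i+1}, a_{i+4})$, while on the right-hand side four of the eight terms vanish, leaving $\lambda\big(d(a_i,a_{i+2}) + d(a_i,a_{i+4}) + 2\,d(a_{i+1},a_{i+3}) + d(a_{i+2},a_{i+4})\big)$. Estimating each of these four distances by walking along consecutive iterates gives the upper bound $\lambda\big(2b_i + 4b_{i+1} + 4b_{i+2} + 2b_{i+3}\big)$. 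Dropping the nonnegative term $d(a_{i+1},a_{i+4})$ on the left and using that $\lambda < \tfrac17 < \tfrac14$ makes the coefficients $4\lambda - 1$ of $b_{i+1}$ and $b_{i+2}$ negative, so they can be discarded; this yields $b_{i+3} \le \mu\, b_i$ with $\mu = \tfrac{2\lambda}{1-2\lambda} \in [0,\tfrac{2}{5})$, in particular $\mu < 1$.

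\emph{Cauchy sequence and limit.} From $b_{i+3} \le \mu\, b_i$ one gets $b_{3k+j} \le \mu^k b_j$ for $j \in \{0,1,2\}$, hence $\sum_n b_n$ converges; the usual telescoping estimate then shows $d(a_n, a_{n+p}) \to 0$ uniformly in $p$, so $\{a_n\}$ is Cauchy and converges to some $a^* \in M$ by completeness. To show $a^* \in Fix(T)$, I would bound $d(a^*, Ta^*) \le d(a^*, a_n) + d(Ta_{n-1}, Ta^*)$, then bound $d(Ta_{n-1}, Ta^*)$ above by the $T$-image perimeter of the quadrilateral on $(a^*, a_{n-1}, a_{n-2}, a_n)$ (legitimate for large $n$, since those four points are distinct), and apply (\ref{qcc}) to that quadruple. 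Using $d(a_{n-1}, Ta^*) \le d(a_{n-1}, a^*) + d(a^*, Ta^*)$ and the analogous bound for $d(a_n, Ta^*)$, the right-hand side reduces to $2\lambda\, d(a^*, Ta^*)$ plus terms that tend to $0$. This gives $(1-2\lambda)\,d(a^*, Ta^*) \le d(a^*, a_n) + \lambda\,\varepsilon_n$ with $\varepsilon_n \to 0$; since $1 - 2\lambda > 0$, letting $n\to\infty$ forces $d(a^*, Ta^*) = 0$.

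\emph{At most three fixed points.} If $p,q,r,s$ were four distinct fixed points, plugging them into (\ref{qcc}) would give $d(p,q)+d(q,r)+d(r,s)+d(s,p) \le 2\lambda\big(d(p,q)+d(q,r)+d(r,s)+d(s,p)\big)$, and dividing by the strictly positive perimeter forces $\lambda \ge \tfrac12$, contradicting $\lambda < \tfrac17$. I expect the only real difficulty to be the bookkeeping in the key estimate and in the limit step: one must choose the triangle-inequality expansions of the ``long'' distances ($d(a_i,a_{i+4})$ and the like) carefully enough that the coefficient $1 - 2\lambda$ surviving on the left stays positive and the resulting contraction factor $\tfrac{2\lambda}{1-2\lambda}$ stays below $1$. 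The hypothesis $\lambda < \tfrac17$ is comfortably sufficient for this — any bound below $\tfrac14$ would suffice for the convergence argument, and $\tfrac17$ is presumably what is forced by the comparison in Theorem~\ref{T53}.
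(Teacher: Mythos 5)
Your proposal is correct and follows essentially the same route as the paper: apply \eqref{qcc} to four consecutive Picard iterates, use the triangle inequality to obtain a geometric decay of $d(a_i,a_{i+1})$, conclude the sequence is Cauchy, pass to the limit via the image-perimeter bound on $(a^*,a_{n-1},a_{n-2},a_n)$ to get $(1-2\lambda)d(a^*,Ta^*)\le 0$, and rule out four fixed points by the factor $2\lambda<1$. The only (harmless) differences are that your key estimate yields the cleaner recursion $b_{i+3}\le \tfrac{2\lambda}{1-2\lambda}b_i$ where the paper gets $b_{i+3}\le \tfrac{6\lambda}{1-\lambda}\max\{b_i,b_{i+1},b_{i+2}\}$, and that three (not four) of the eight right-hand terms vanish, one term occurring twice — your displayed expression is nonetheless correct.
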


\begin{proof}
Let $T:M \to M$ be a Chatterjea type perimetric contraction quadrilaterals and let $T$ have no periodic points of prime period $2$ and $3$.

Let $a_0 \in M$ be chosen arbitrarily. Define $Ta_0=a_1, Ta_1=a_2, \cdots, Ta_n=a_{n+1}, \cdots$. If $a_i$ is a fixed point of $T$ for any $i\in \mathbb{N} \cup \{0\}$, then we are done.

Since $a_i$ are not fixed points of $T$ and $T$ have no periodic points of prime period $2$ and $3$, similarly as in Theorem~\ref{T33}, we can show that any four consecutive elements of the sequence $\{a_n\}$ are distinct. Now, we have


\begin{align*}
&d(Ta_i,Ta_{i+1})+d(Ta_{i+1},Ta_{i+2})+d(Ta_{i+2},Ta_{i+3})+d(Ta_{i+3},Ta_i)\\
&\le \lambda (d(a_i,Ta_{i+1})+ (d(a_i,Ta_{i+3})+ d(a_{i+1},Ta_{i})+ d(a_{i+1},Ta_{i+2})\\
& + d(a_{i+2},Ta_{i+1})+ d(a_{i+2},Ta_{i+3})+ d(a_{i+3},Ta_{i+2})+ d(a_{i+3},Ta_{i}))\\
\implies &d(a_{i+1},a_{i+2})+d(a_{i+2},a_{i+3})+d(a_{i+3},a_{i+4})+d(a_{i+4},a_{i+1})\\
&\le \lambda (d(a_i,a_{i+2})+ (d(a_i,a_{i+4})+ d(a_{i+1},a_{i+1})+ d(a_{i+1},a_{i+3})\\
&+ d(a_{i+2},a_{i+2})+ d(a_{i+2},a_{i+4})+ d(a_{i+3},a_{i+3})+ d(a_{i+3},a_{i+1}))\\
\implies &d(a_{i+3},a_{i+4})\le \lambda (d(a_i,a_{i+2})+ (d(a_i,a_{i+4})+ d(a_{i+1},a_{i+3})\\
& \hspace{2.5cm}+ d(a_{i+2},a_{i+4})+ d(a_{i+3},a_{i+1})) - d(a_{i+3},a_{i+4})\\
\implies & 2d(a_{i+3},a_{i+4})\le \lambda (d(a_i,a_{i+1})+ d(a_{i+1},a_{i+2})+ (d(a_i,a_{i+1})+ d(a_{i+1},a_{i+2})\\
&\hspace{2.5cm}+ d(a_{i+2},a_{i+3})+ d(a_{i+3},a_{i+4}) d(a_{i+1},a_{i+2})+ d(a_{i+2},a_{i+3})\\
&\hspace{2.5cm}+ d(a_{i+2},a_{i+3})+ d(a_{i+3},a_{i+4})+ d(a_{i+1},a_{i+2})+ d(a_{i+2},a_{i+3})\\
\implies &2(1-\lambda)d(a_{i+3},a_{i+4})\le  \lambda (2d(a_i,a_{i+1})+ 4d(a_{i+1},a_{i+2})+ 4d(a_{i+2},a_{i+3})\\ 
\implies &d(a_{i+3},a_{i+4}) \le \frac{2\lambda}{1-\lambda} (d(a_i,a_{i+1})+ d(a_{i+1},a_{i+2})+ d(a_{i+2},a_{i+3})\\
\implies &d(a_{i+3},a_{i+4}) \le \frac{6\lambda}{1-\lambda} \max\{d(a_i,a_{i+1}),d(a_{i+1},a_{i+2}),d(a_{i+2},a_{i+3}\}
\end{align*}

Suppose that $\delta= \frac{6\lambda}{1-\lambda}$ then $\delta \in [0,1)$ as $\lambda \in [0,\frac{1}{7})$.

Also take, $k_i=d(a_i,a_{i+1})$ and $k=\max\{k_1,k_2,k_3\}$.

So, we can write, 
\begin{align*}
    &d(a_{i+3},a_{i+4}) \le \delta \max\{d(a_i,a_{i+1}),d(a_{i+1},a_{i+2}),d(a_{i+2},a_{i+3})\}\\
    \implies & k_{i+3} \le \delta \max\{k_i,k_{i+1},k_{i+2}\}.
\end{align*}

Hence, we get 
\begin{align*}
&k_1 \le k, k_2 \le k, k_3 \le k,
k_4 \le \delta k, k_5 \le \delta k, k_6 \le \delta k, k_7 \le {\delta}^2 k, \cdots.
\end{align*}

Since $\lambda \in [0,1)$, thus we have 
\begin{align*}
&k_1 \le k, k_2 \le k, k_3 \le k,
k_4 \le {\delta}^{\frac{1}{3}} k, k_5 \le {\delta}^{\frac{2}{3}} k, k_6 \le \delta k, k_7 \le {\delta}^{\frac{4}{3}} k, \cdots.\\
\implies & k_n \le {\delta}^{\frac{n}{3}-1} k, \text{ for all } n \in \mathbb{N} \text{ with } n \ge 4.
\end{align*}

Now, for all $n \in \mathbb{N} \cup \{0\}$ and for any $p=1,2,\cdots$, we have
\begin{align*}
d(a_n,a_{n+p}) &\le d(a_n,a_{n+1})+ d(a_{n+1},a_{n+2})+ \cdots + d(a_{n+p-1},a_{n+p})\\
&\le k_n+ k_{n+1}+ \cdots + k_{n+p-1}\\
&\le k(\delta^{\frac{n}{3}-1} + \delta^{\frac{n+1}{3}-1} + \cdots + \delta^{\frac{n+p-1}{3}-1}) \\
&\le k \delta^{\frac{n}{3}-1} \dfrac{1-\delta^{\frac{p}{3}}}{1-\delta^{\frac{1}{3}}}.
\end{align*}

Thus, $d(a_n,a_{n+p}) \to 0$ as $n \to \infty$ for any $p=1,2,\cdots$. Therefore, $\{a_n\}$ is a Cauchy sequence and hence convergent as $M$ is complete. Let  $a_n \to a^* \in M$.  Now,
\begin{align*}
&d(a^*,Ta^*)\\
&\le d(a^*,a_n)+ d(a_n,Ta^*)\\
&\le d(a^*,a_n)+ d(Ta^*,Ta_{n-1})+ d(Ta_{n-1},Ta_{n-2})+ d(Ta_{n-2},Ta_n)+ d(Ta_n,Ta^*)\\
&\le d(a^*,a_n)+ \lambda (d(a^*,Ta_{n-1})+ d(a^*,Ta_{n})+ d(a_{n-1},Ta^*)+ d(a_{n-1},Ta_{n-2})\\
&+ d(a_{n-2},Ta_{n-1})+ d(a_{n-2},Ta_{n})+ d(a_{n},Ta_{n-2})+ d(a_{n},Ta^*))\\
&\le d(a^*,a_n)+ \lambda (d(a^*,a_{n})+ d(a^*,a_{n+1})+ d(a_{n-1},Ta^*)+ d(a_{n-1},a_{n-1})\\
&+ d(a_{n-2},a_{n})+ d(a_{n-2},a_{n+1})+ d(a_{n},a_{n-1})+ d(a_{n},Ta^*)).
\end{align*}

Tending $n$ to infinity, we get
\begin{align*}
(1-2\lambda) d(a^*,Ta^*) \le 0.
\end{align*}
This implies $Ta=a$, i.e., $a \in Fix(T)$.

If possible, suppose that $T$ has four distinct fixed points, say, $p,q,r,s \in M$. 
Then from (\ref{qcc}), we have 
\begin{align*}
     &d(p,q)+ d(q,r)+ d(r,s)+ d(s,p)  \le 2\lambda (d(p,q)+ d(q,r)+ d(r,s)+ d(s,p))\\
     \implies &(1- 2\lambda) (d(p,q)+ d(q,r)+ d(r,s)+ d(s,p)) \le 0,
\end{align*}
which contradicts the fact that $p,q,r,s$ all are distinct since $(1- 2\lambda)>0$.

Thus, $T$ can contain at most three fixed points. 
\end{proof}

Now, an example is provided to show the existence of a Chatterjea type perimetric contraction quadrilaterals mapping with three fixed points.  

\begin{example}\label{E55} 
Let $M=\{w,x,y,z \}$ and the metric $d$ be defined on $M$ by, $d(x,y)= d(y,z)= d(z,x)= 1$ and $d(x,w)= d(y,w)= d(z,w)= 10$. Let us define the mapping $T$ by $Tx=x, Ty=y, Tz=z, Tw=x$.

Then $(M,d)$ is a complete metric space and $T$ is a Chatterjea type perimetric contraction on quadrilaterals on $M$. Also, $T$ has no periodic points of prime period $2$ and $3$. Therefore by Theorem~\ref{T54}, $T$ has three fixed points and $Fix(T)= \{x,y,z\}$.
\end{example}
While the converse of Theorem ~\ref{T44} does not necessarily hold in general, a simple proof of a partial converse exists, which we exclude here. 
\begin{theorem}\label{T56}
Let $(M,d)$ be a complete metric space with at least four points, and let $T: M \to M$ be a Chatterjea type perimetric contraction on quadrilaterals. If $T$ possesses a fixed point, then  $T$ does not possess periodic points of prime period $3$. 
\end{theorem}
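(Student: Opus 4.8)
The plan is to argue by contradiction: suppose $T$ has a fixed point $p$ and also a periodic point $q$ of prime period $3$. Set $q_1 = Tq$, $q_2 = T^2q$, so that $Tq_2 = q$, and the three points $q, q_1, q_2$ are pairwise distinct and distinct from $p$ (since $p$ is fixed and $q$ has prime period $3$). First I would apply the defining inequality \eqref{qcc} to a suitable quadruple drawn from $\{p, q, q_1, q_2\}$. The natural first choice is $(p, q, q_1, q_2)$: the left side is $d(Tp,Tq) + d(Tq,Tq_1) + d(Tq_1,Tq_2) + d(Tq_2,Tp) = d(p,q_1) + d(q_1,q_2) + d(q_2,q) + d(q,p)$, which is the full perimeter of the quadrilateral $p, q_1, q_2, q$ (equivalently $p,q,q_1,q_2$ up to reordering), while the right side is $\lambda$ times a sum of eight ``diagonal-type'' distances, each of which can be bounded above using the triangle inequality by sums of the edge-distances among $p,q,q_1,q_2$.

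Next I would estimate the right-hand side. Each term $d(x, Ty)$ with $x,y \in \{p,q,q_1,q_2\}$ is a distance between two of the four points $p, q, q_1, q_2$ (because $Tq = q_1$, $Tq_1 = q_2$, $Tq_2 = q$, $Tp = p$). Writing $P = d(p,q) + d(q,q_1) + d(q_1,q_2) + d(q_2,p)$ for the perimeter, and using the triangle inequality to bound each of the eight terms on the right by at most $P$ (or, more carefully, by sums of at most two or three edges, then summing and collecting), one gets the right-hand side bounded by $CP$ for an explicit constant $C$ — small enough that $\lambda C < 1$ for $\lambda \in [0, \tfrac{1}{7})$. Combined with the exact identity for the left-hand side (which, after the reordering noted above, is exactly $P$ minus possibly one or two edges, or even equals $P$ itself depending on the cyclic order chosen), this forces $P \le \lambda C\, P$ with $\lambda C < 1$, hence $P = 0$, contradicting the distinctness of $p, q, q_1, q_2$. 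If one quadruple does not immediately close the gap, I would sum the inequality \eqref{qcc} over the relevant cyclic rearrangements of $(p, q, q_1, q_2)$ (there are three essentially different cyclic orders), exactly as was done in the proof of Theorem~\ref{T33}; summing symmetrizes the right-hand side and typically produces a clean bound of the form $3P \le 2\lambda \cdot (\text{sum of perimeters}) = 6\lambda P$ or similar, again yielding $1 \le 6\lambda$ or $1 \le 2\lambda$, a contradiction since $\lambda < \tfrac{1}{7}$.

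The \textbf{main obstacle} is the bookkeeping: the right-hand side of \eqref{qcc} is not symmetric in its four arguments, so a single application to one quadruple may not suffice, and one has to choose the cyclic orderings carefully so that, after summing, every edge $d(x,y)$ among $\{p,q,q_1,q_2\}$ appears on the right-hand side with a controlled multiplicity while the left-hand side recovers a fixed positive multiple of the total perimeter. The key computational point — the analogue of the step ``Adding \eqref{331}, \eqref{332}, \eqref{333}, we get $\alpha \ge 1$'' in Theorem~\ref{T33} — is to verify that the resulting coefficient comparison gives a constant strictly less than $1$ precisely in the regime $\lambda < \tfrac{1}{7}$ (so the bound on $\lambda$ is exactly what makes the argument work, and no stronger hypothesis is needed). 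Since the paper describes this as having ``a simple proof of a partial converse'' that is omitted, I expect the intended argument is exactly this contradiction, with the only real care needed being the choice of the two or three quadruples to apply \eqref{qcc} to and the arithmetic of summing them.
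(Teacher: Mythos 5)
Your plan is correct; since the paper explicitly omits its own proof of this theorem, there is nothing to compare it against, but this is surely the intended ``simple'' argument. The bookkeeping you flag as the main obstacle does close, and in fact a single quadruple already suffices --- no summation over cyclic orders is needed. Write $q_1=Tq$, $q_2=Tq_1$, so $Tq_2=q$; the four points $p,q,q_1,q_2$ are pairwise distinct (if some $q_i=p$ then $q_{i+1}=Tq_i=q_i$, contradicting prime period $3$). Applying \eqref{qcc} to the ordered quadruple $(p,q,q_1,q_2)$, every term $d(x,Ty)$ on the right is a distance between two of the four points and two of the eight terms vanish, namely $d(q_1,Tq)=d(q_2,Tq_1)=0$; one obtains
\begin{align*}
L:=d(p,q_1)+d(q_1,q_2)+d(q_2,q)+d(q,p)\le \lambda\bigl(d(p,q_1)+2d(p,q)+d(q,q_2)+d(q,q_1)+d(p,q_2)\bigr).
\end{align*}
Bounding the two remaining ``diagonal'' terms by $d(q,q_1)\le d(q,q_2)+d(q_2,q_1)$ and $d(p,q_2)\le d(p,q_1)+d(q_1,q_2)$ shows the right-hand side is at most $2\lambda L$, whence $1\le 2\lambda$ (since $L>0$ by distinctness), contradicting $\lambda<\tfrac{1}{7}$. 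Your fallback of summing \eqref{qcc} over the three essentially different cyclic orders of $(p,q,q_1,q_2)$ also works and yields the same threshold $\lambda\ge\tfrac12$, so either version of your plan completes the proof.
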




In Example~\ref{E55} we have seen that Chatterjea type perimetric contraction quadrilaterals may achieve more than one fixed point. Thus we can ensure the existence of a unique fixed point of such a mapping if an infinite complete metric space is considered which is shown in our next result:

\begin{proposition}\label{P57}
Let $(M,d)$ be a complete metric space, and let $T: M \to M$ be a Chatterjea type perimetric contraction on quadrilaterals. If $M$ contains infinitely many points such that the iteration sequence $m_0, m_1=Tm_0, m_2=Tm_1,\dots $, converges to a point $\xi$ with $\xi \neq m_i$; for all $i \in \mathbb{N} \cup \{0\}$, then $\xi$ is the unique fixed point of $T$.
\end{proposition}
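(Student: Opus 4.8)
The plan is to mimic the structure of Proposition~\ref{P37} and Proposition~\ref{P47}. By Theorem~\ref{T54}, the sequence $\{m_n\}$ is Cauchy, converges to $\xi$, and $\xi$ is a fixed point of $T$; this is the ``existence'' half and requires no new work. So it remains only to prove uniqueness under the hypothesis $\xi \neq m_i$ for all $i \in \mathbb{N}\cup\{0\}$.

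For uniqueness I would argue by contradiction: suppose $\eta$ is a fixed point with $\eta \neq \xi$. First I would observe that $\eta \neq m_i$ for every $i$: if $\eta = m_i$ for some $i$, then applying $T$ repeatedly gives $\eta = T^k m_i = m_{i+k} \to \xi$, forcing $\eta = \xi$, a contradiction. Hence the four points $\xi,\eta,m_n,m_{n+1}$ are pairwise distinct for every $n$ (using also $m_n \neq m_{n+1}$, which holds since no $m_n$ is a fixed point). Now I would apply the defining inequality~(\ref{qcc}) to the quadruple $(p,q,r,s) = (\xi, m_n, \eta, m_{n+1})$, exactly paralleling the computation in Proposition~\ref{P47} but with the Chatterjea-type right-hand side. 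On the left we get
\begin{align*}
&d(T\xi,Tm_n)+d(Tm_n,T\eta)+d(T\eta,Tm_{n+1})+d(Tm_{n+1},T\xi)\\
&= d(\xi,m_{n+1})+d(m_{n+1},\eta)+d(\eta,m_{n+2})+d(m_{n+2},\xi),
\end{align*}
and on the right the eight terms $d(p,Tq),d(p,Ts),\dots$ become things like $d(\xi,m_{n+1})$, $d(\xi,m_{n+2})$, $d(m_n,\xi)$, $d(m_n,\eta)$, $d(\eta,m_{n+1})$, $d(\eta,m_{n+2})$, $d(m_{n+1},\eta)$, $d(m_{n+1},\xi)$.

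Then I would let $n\to\infty$. Since $m_n\to\xi$, every distance of the form $d(\,\cdot\,,m_{n+k})$ tends to $d(\,\cdot\,,\xi)$, and $d(\xi,m_{n+k})\to 0$. The left-hand side tends to $d(\xi,\eta)+d(\eta,\xi) = 2d(\xi,\eta)$, while the right-hand side tends to $\lambda$ times a sum that collapses to $2d(\eta,\xi)+2d(\eta,\xi) = 4d(\xi,\eta)$ (the four ``$\xi$-to-$\xi$'' terms vanish, leaving the four ``$\eta$-to-$\xi$'' terms). This yields $2d(\xi,\eta)\le 4\lambda\,d(\xi,\eta)$, i.e. $(1-2\lambda)d(\xi,\eta)\le 0$; since $\lambda<\frac{1}{7}<\frac12$ we have $1-2\lambda>0$, forcing $d(\xi,\eta)=0$, contradicting $\eta\neq\xi$. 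Therefore the fixed point is unique.

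The only mild obstacle is bookkeeping: one must verify that all four points fed into~(\ref{qcc}) are genuinely distinct for each $n$ (handled above) and track which of the eight right-hand terms survive in the limit. Everything else is a routine limiting argument identical in spirit to Propositions~\ref{P37} and~\ref{P47}; no new idea beyond choosing the right quadruple is needed.
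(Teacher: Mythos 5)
Your proposal is correct and follows essentially the same route as the paper: apply the defining inequality to the quadruple $(\xi, m_n, \eta, m_{n+1})$ and let $n\to\infty$. Your explicit limit computation, giving $2d(\xi,\eta)\le 4\lambda\, d(\xi,\eta)$ and hence $(1-2\lambda)d(\xi,\eta)\le 0$, is in fact more careful than the paper's write-up, which asserts that the ratio of the two sides tends to $1$ when it actually tends to $\tfrac12$ (still a contradiction with $\lambda<\tfrac17$, so the conclusion is unaffected).
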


\begin{proof}
Theorem~\ref{T54} assures that $T$ has a fixed point $\xi$. Let $\eta$ be another fixed point of $T$. Then $\eta \neq m_i$, for all $i \in \mathbb{N} \cup \{0\}$, otherwise we have $\xi = \eta$. Therefore, $\xi, \eta, m_i$ are all distinct, for all $i \in \mathbb{N} \cup \{0\}$.

Now, for all $i\in \mathbb{N}\cup \{0\}$, we have
\begin{align*}
&d(T\xi, Tm_i)+ d(Tm_i, T\eta)+ d(T\eta,Tm_{i+1})+ d(Tm_{i+1},T\xi)\\
&\le \lambda_i (d(\xi, Tm_i)+ d(\xi,Tm_{i+1})+ d(m_i, T\xi)+ d(m_i, T\eta))\\
&+ d(\eta, Tm_i)+ d(\eta, Tm_{i+1})+ d(m_{i+1}, T\eta)+ d(m_{i+1}, T\xi),
\end{align*}
which implies,
\begin{align*}
&d(\xi, m_{i+1})+ d(m_{i+1}, \eta)+ d(\eta,m_{i+2})+ d(m_{i+2},\xi)\\
&\le \lambda_i (d(\xi, m_{i+1})+ d(\xi,m_{i+2})+ d(m_i, \xi)+ d(m_i, \eta))\\
&+ d(\eta, m_{i+1})+ d(\eta, m_{i+2})+ d(m_{i+1}, \eta)+ d(m_{i+1}, \xi).
\end{align*}
Then $\lambda_i \le \lambda$ for all $i \in \mathbb{N}\cup \{0\}$. Now, letting $i \to \infty$, we get $\lambda_i \to 1$ - which contradicts (\ref{qcc}).
Therefore, $T$ has a unique fixed point.
\end{proof}

\begin{Acknowledgement}
The first author of the paper would like to thank the Council of Scientific and Industrial Research, Government of India, for supporting financially to carry out this work. The authors are also thankful to Subhadip Pal, NIT Durgapur, for his suggestions during the preparation of the manuscript.
\end{Acknowledgement}
%

\end{document}